\numberwithin{equation}{section}
\newtheorem{Theorem}{Theorem}[section]
\newtheorem{Prop}[Theorem]{Proposition}
\newtheorem{Lemma}[Theorem]{Lemma}
\newtheorem{Corollary}[Theorem]{Corollary}
\theoremstyle{definition}
\newtheorem{Definition}[Theorem]{Definition}
\newtheorem{Example}[Theorem]{Example}
\newtheorem{Remark}[Theorem]{Remark}
\newcommand{\Z}{\mathbb{Z}}
\newcommand{\PP}{\mathbb{P}}
\newcommand{\Q}{\mathbb{Q}}
\begin{document}

\bibliographystyle{amsalpha}

\title[Difference equations and cluster algebras I]{
Difference equations and cluster algebras I:
Poisson bracket for integrable difference equations
}

\author[R.\  Inoue]{Rei Inoue}
\address{ Rei  Inoue:
Faculty of Pharmaceutical Sciences, Suzuka University of Medical Science,
Suzuka, 513-8670, Japan}

\author{Tomoki Nakanishi}
\address{
Tomoki Nakanishi: Graduate School of Mathematics, Nagoya University,
Nagoya, 464-8604, Japan}

\maketitle

\begin{abstract}
We introduce the cluster algebraic formulation of the integrable difference 
equations,
the discrete Lotka-Volterra equation and the discrete Liouville equation,
from the view point of the general T-system and Y-system.
We also study the Poisson structure for the cluster algebra,
and give the associated Poisson bracket for the two difference equations.
\end{abstract}

%\keywords{T-systems; Y-systems;  quantum groups; cluster algebras}

%%%%%%%%%%%%%%%%%%%%%%
\section{Introduction}
%%%%%%%%%%%%%%%%%%%%%%

The T-systems and Y-systems are difference equations
arising from the study of various integrable statistical and field theoretical
models in 1990's. See  \cite{KNS10} for a recent review of the subject.
Since the introduction of cluster algebras by Fomin and Zelevinsky
around 2000, it has been gradually noticed 
that these systems are naturally formulated
with cluster algebras
\cite{FZ03, HL09, Ke10, DiFrancesco09a, Ke2, IIKNS10, KNS09, 
Inoue10a, Inoue10b, Nakanishi10a, Nakanishi10b};
furthermore, their cluster algebraic nature
is essential to prove the long-standing conjectures on their
periodicities and the associated dilogarithm identities
\cite{FZ03, Ke10, Ke2, IIKNS10, Nakanishi09, Inoue10a,
Inoue10b, Nakanishi10b, N10}.
More recently, by inverting the point of view,
T-systems and Y-systems are extensively generalized
so that they are associated with
any
periodic sequence of exchange matrices in a cluster algebra \cite{N10}.
This generalization includes the difference equations
studied earlier in \cite{FZ07,FordyMarsh09} as special cases.
In this paper and the subsequent ones, we are going to 
study these general T and Y-systems, especially 
in connection with known {\em integrable} difference
equations.

Let us give several reasons/motivations why we are interested
in such difference equations arising from cluster algebras.

(i) They provide infinitely many difference equations, some of
which are known integrable difference equations
(Hirota-Miwa \cite{HL09, DiFrancesco09a,IIKNS10}, Toda \cite{GSV09},
Somos 4 \cite{FZ02,Hone07,FordyMarsh09},
discrete Liouville,  discrete Lotka-Volterra equations, {\em etc.}),
and almost all of which are new ones.
Therefore, they might provide the ground for a unified treatment of
a wide variety of (known and unknown) integrable difference equations.

(ii) They have the built-in Poisson and symplectic structures
\cite{GSV02,GSV03,GSV09,GSV10, FockGon03, FockGon07, Fordy10}.
We would think that 
the Poisson structure for integrable {\it difference} equations 
are not understood enough yet,
comparing with that for integrable {\it differential} equations.
We expect that they provide some key to this problem.

(iii) Any Y-system and the corresponding T-system (the latter  is
often the equation for the {\em $\tau$ function\/})
are unified by a cluster algebra,
and, {\em they are formally solved from the beginning\/} through
the {\em categorification\/} of the cluster algebra,
the cluster category, recently developed by Keller and others
\cite{Caldero06,Buan06, DK,FK, Ke10, A, Ke2,Plamondon10a,Plamondon10b}.
Furthermore, the both systems reduce to the {\em tropical Y-system},
which is  a much simpler piecewise-linear system.
We call the totality of
these phenomena the {\em integrability by categorification}.
These results and methods are not limited to bilinear equations.
A more account will be given in Section \ref{subsec:categorification}.

The aim of this paper is to give the cluster algebraic 
formulation and the associated Poisson bracket
for two typical examples of integrable difference equations,
the discrete Lotka-Volterra equation (discrete LV equation) \cite{HT94,HT95}
and the discrete Liouville equation \cite[\S 3]{FV99}.
We follow \cite{FZ07} for the definition of 
the cluster algebra $\mathcal{A}(B,x,y)$ given by 
a skew-symmetrizable matrix $B$, the cluster $x$ and 
the coefficient tuple $y$, with the mutations. 
For the purpose, we study
the {\it mutation compatible} Poisson bracket
and define the {\it Poisson structure} for $\mathcal{A}(B,x,y)$,
which corresponds to a generalization of those in \cite{GSV02,FockGon07}.
We are especially interested in the case that 
the matrix $B$ is infinite and not invertible,
since the discrete LV equation is such case.
The discrete periodic Liouville equation is 
an example of the case that the matrix $B$ is finite and invertible.
We further study the Poisson bracket {\it with symmetry}
for the discrete LV equation.

This paper is organized as follows:
in \S 2, we briefly explain basic definitions of cluster algebras,
and formulate the discrete LV equation and the discrete Liouville equation
in the framework of the cluster algebras.
The notion of the integrability by categorification is explained
in \S 2.4. 
In \S 3 and \S 4, we construct the mutation compatible Poisson bracket
(Definition \ref{def:mut-com}) at Theorem \ref{prop:x-Poisson:tn}
and Proposition \ref{prop:xy-poisson},
and define the Poisson structure for the cluster algebra.
Especially, the Poisson matrix $P$ is formulated 
at Theorem \ref{th:x-Poisson}
(resp. Theorem \ref{th:x-Poisson-infinite})
for a finite $B$ (resp. an infinite $B$). 
%the cluster algebra without coefficients $\mathcal{A}(B,x)$ 
%and the cluster algebra with coefficients $\mathcal{A}(B,x,y)$
%respectively.
Finally in \S 5, we apply \S 3 and \S 4 to \S 2, 
and study the Poisson brackets for the two integrable difference equations.

%%%%%%%%%%%%%%%%%%%%%%%%%%
\subsection*{Acknowledgements}
%%%%%%%%%%%%%%%%%%%%%%%%%%

R.~I. is partially supported by Grant-in-Aid for Young Scientists (B)
(22740111).

%%%%%%%%%%%%%%%%%%%%%%%%%%%%%%%%%%%%%%%%%%
\section{Cluster algebraic formulation of difference equations} 
%%%%%%%%%%%%%%%%%%%%%%%%%%%%%%%%%%%%%%%%%%
\subsection{Cluster algebra: basic definitions}
\label{subsec:def}
%%%%%%%%%%%%%%%%%%%%%%%%%%%%%%%%%%%%%%%%%%%%

We briefly explain basic definitions of cluster algebra 
following \cite{FZ07}.
Let $I \subset \Z$ be an index set. (It can be infinite.)
We say that an integer matrix $B = (b_{ij})_{i,j \in I}$ 
is {\it skew-symmetrizable},
if there is a diagonal positive integer matrix 
$D= \mathrm{diag}(d_i)_{i \in I}$ 
such that $DB$ is skew-symmetric. 
When $I$ is infinite, we always assume that a skew-symmetrizable matrix $B$ 
has only finitely many nonzero elements in each row and in each column.

For a skew-symmetrizable matrix $B$ and $k \in I$, 
we have the mutation $B' = \mu_k(B)$ of $B$ at $k$ defined by 
\begin{align}\label{B-mutation}
  b'_{ij} 
  = 
  \begin{cases}
    -b_{ij} & \text{$i=k$ or $j=k$}, \\
%    b_{ij} + |b_{ik}| b_{kj} & b_{ik} b_{kj} > 0, \\        
%    b_{ij} & b_{ik} b_{kj} \leq 0, \\        
    b_{ij} + \frac{1}{2}(|b_{ik}| b_{kj} + b_{ik}|b_{kj}|)        
    & \text{otherwise}.
  \end{cases} 
\end{align}
The matrix $B'$ is again skew-symmetrizable.

Let $\PP$ be a given semifield and write $\Q \PP$ for the quotient field of 
the group ring $\Z \PP$ of $\PP$.
For an $I$-tuple $y=(y_i)_{i \in I}$ as $y_i \in \PP$, 
the mutation $y' = \mu_k(y)$ of $y$ is defined by the exchange relation
\begin{align}\label{y-mutation}
  y_i' 
  = 
  \begin{cases} 
  y_k^{-1} & i= k, \\
  \displaystyle{
  y_i \left(\frac{y_k}{1 \oplus y_k} \right)^{b_{ki}}} 
  & i \neq k, b_{ki} \geq 0, \\
  y_i (1 \oplus y_k)^{-b_{ki}} & i \neq k, b_{ki} \leq 0.
  \end{cases}
\end{align}
Let $\Q \PP(u)$ be the rational functional field of algebraically independent
variables $\{u_i\}_{i \in I}$.
For an $I$-tuple $x=(x_i)_{i \in I}$ such that $\{x_i\}_{i \in I}$
is a free generating set of $\Q\PP(u)$ and for $k \in I$,
the mutation $x' = \mu_k(x)$ of $x$ is defined by the exchange relation
\begin{align}\label{x-mutation}
  x'_i 
  =
  \begin{cases}
    x_i & i \neq k, \\
    \displaystyle{
    \frac{y_k \prod_{j:b_{jk} > 0} x_j^{b_{jk}} 
          + \prod_{j:b_{jk} < 0} x_j^{-b_{jk}}}{(1 \oplus y_k)x_k}} 
    & i = k. 
  \end{cases}
\end{align} 
The mutation \eqref{B-mutation}--\eqref{x-mutation} is involutive,
{\it i.e.}, $\mu_k^2=\mathrm{id}$.
The $I$-tuples $x$ and $y$ are respectively called a cluster 
and a coefficients tuple,
and $x_i$ and $y_i$ are respectively called a cluster variable
and a coefficient.
By iterating mutations by starting with the initial seed $(B,x,y)$,
we obtain seeds $(B',x',y')$.
The cluster algebra $\mathcal{A}(B,x,y)$ is 
a $\Z \PP$-subalgebra of $\Q \PP(u)$
generated by all the cluster variables in all the seeds.

Alternatively, one may consider the {\em $I$-regular tree} $\mathbb{T}_I$
whose edges are labeled by $I$ with a distinguished vertex 
$t_0\in \mathbb{T}_I$
(the {\em initial vertex}),
and regard that a seed $(B',x',y')$ is assigned to each vertex $t' \in \mathbb{T}_I$
so that (i) for the vertex $t_0$ the initial seed $(B,x,y)$ is attached,
and (ii) for any edge $t'\, \frac{k}{\phantom{aaa}}\,  t''$ the corresponding
seeds are related by the mutation at $k$.
We call the assignment  the {\em cluster pattern} for the cluster algebra
$\mathcal{A}(B,x,y)$.

In the rest of this section, we let $\PP$ be the universal semifield
$\PP_{\mathrm{univ}}(y)$ generated by $y=(y_i)_{i \in I}$,
which is the set of all rational functions of $y_i ~(i \in I)$
written as subtraction-free expressions.
Here the operation $\oplus$ is the usual addition.

%%%%%%%%%%%%%%%%%%%%%%%%%%%%%%%%%%%%%%%%%%%%%
\subsection{Discrete Lotka-Volterra equation}
%%%%%%%%%%%%%%%%%%%%%%%%%%%%%%%%%%%%%%%%%%%%%
\label{subsec:dLV}

The {\em discrete Lotka-Volterra equation} (discrete LV equation) 
is the difference equation \cite{HT94,HT95}: 
\begin{align}
\label{eq:U1}
u^{t+1}_{n+1}=u^t_n \frac{1+\delta u^{t+1}_n}
{1+\delta u^t_{n+1}},
\end{align}
where $u_n^t$ is a function of $(n,t) \in \Z^2$. 
This has the bilinear form in the following sense:
suppose that $\{\tau_n^t ~|~ (n,t) \in \Z^2 \}$ satisfies the relation
(the bilinear form of the discrete LV equation \cite{HT94,HT95}):
\begin{align}
\label{eq:T1}
\tau^{t-1}_n\tau^{t+1}_{n+1}
=
\frac{\delta}{1+\delta} \tau^{t-1}_{n+1}\tau^{t+1}_n
+
\frac{1}{1+\delta} \tau^{t}_{n}\tau^{t}_{n+1}.
\end{align}
Then, $\{u_n^t \}$ defined by
\begin{align}\label{u-tau}
u^{t}_n&=\frac{\tau^{t+1}_n\tau^{t-1}_{n+1}}
{\tau^t_{n+1}\tau^t_n}
\end{align}
satisfies \eqref{eq:U1}. 
Note that not all the solutions to \eqref{eq:U1} are written in this way.
Here we concentrate on the solutions of \eqref{eq:U1} admitting 
the bilinear form \eqref{eq:T1}.

Let $Q=Q(0)$ be the infinite quiver depicted at Figure 1,
where the vertex set of $Q$ is labelled by $I = \{i ~|~ i \in \Z \}$.
Let $I_{\overline{k}} := \{i ~|~ i \in 3 \Z + k \} ~(k=0,1,2)$.
We identify the quiver $Q$ without loops and $2$-cycles
with the skew-symmetric matrix 
$B = (b_{ij})_{i,j \in I}$ in 
the standard way. Namely, we set $b_{ij} = -b_{ji} = t$ for $i \neq j$
if there are $t$ arrows from the vertex $i$ to the vertex $j$ in $Q$, and 
$b_{ij} = 0$ otherwise.
Then the corresponding matrix $B = (b_{ij})_{i,j \in I}$ is 
skew-symmetric and written as
\begin{align}\label{B-LV}
  \begin{split}
  &b_{3k,3k+i} = \delta_{i,1} + \delta_{i,-1} - \delta_{i,2} - \delta_{i,-2},
  \\
  &b_{3k+1,3k+1+i} = \delta_{i,1} + \delta_{i,2} + \delta_{i,-3} 
               - \delta_{i,-1} - \delta_{i,-2} - \delta_{i,3},
  \\
  &b_{3k+2,3k+2+i} = 
  - \delta_{i,1} - \delta_{i,-1} + \delta_{i,2} + \delta_{i,-2},
  \end{split}   
\end{align}
for $i,k \in \Z$.
Note that $B$ is $3$-periodic, {\it i.e.}, $b_{i+3,j+3} = b_{i,j}$.
Let $\mathcal{A}(B,x,y)$ be the corresponding cluster algebra.

Define the composite mutation  
$\mu_{\overline{k}} = \prod_{i \in I_{\overline{k}}} \mu_i$ for $k=0,1,2$.
We define $B(u) = (b(u)_{ij})_{i,j \in I}$ 
for $u \in \Z$ by
\begin{align}\label{Bu-LV}
  \begin{split}
  &B(0) = B, 
  \\
  &B(3i+k+1) = \mu_{\overline{k}}(B(3i+k)) \quad i \geq 0,
  \\
  &B(3i+k) = \mu_{\overline{k}}(B(3i+k+1)) \quad i \leq -1,
  \end{split}
\end{align} 
for $k=0,1,2$.
We have the following symmetry and periodicity of $B(u)$, which is
easy to see by Figure 1.
\begin{Lemma}
We have 
\begin{align}
\label{eq:b11}
b(u)_{i+3,j+3}&=b(u)_{i,j},\\
\label{eq:b12}
b(u+1)_{i,j}&=b(u)_{i-1,j-1},\\
\label{eq:b13}
b(u+3)_{i,j}&=b(u)_{i,j}.
%  \label{psi-sym}
%  b(u+1)_{i,j} = b(u)_{i-1,j-1}.
\end{align}
\end{Lemma}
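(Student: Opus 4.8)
The plan is to subsume the three relations under a single structural identity. Write $\tau$ for the index shift $\tau(i)=i+1$, with induced action $(\tau_{*}B)_{ij}=b_{i-1,j-1}$ on matrices, so that $\tau_{*}^{\,3}$ is the shift $i\mapsto i+3$. In this notation \eqref{eq:b12} is exactly $B(u+1)=\tau_{*}B(u)$, which by an immediate induction is equivalent to the strong form
\begin{align}\label{eq:strongform}
B(u)=\tau_{*}^{\,u}B(0)\qquad(u\in\Z).
\end{align}
I would prove \eqref{eq:strongform} and deduce the rest for free: substituting $b(u)_{ij}=b(0)_{i-u,\,j-u}$, relation \eqref{eq:b11} becomes $b(0)_{i+3-u,\,j+3-u}=b(0)_{i-u,\,j-u}$ and \eqref{eq:b13} becomes $b(0)_{i-3-u,\,j-3-u}=b(0)_{i-u,\,j-u}$, both of which are just the given $3$-periodicity of $B(0)$.

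The identity \eqref{eq:strongform} rests on one naturality statement and one genuine computation. Mutation is natural under relabelling of $I$: for any bijection $\pi$ one has $\mu_{\pi(k)}(\pi_{*}B)=\pi_{*}\mu_{k}(B)$ straight from \eqref{B-mutation}. Since $\tau(I_{\overline{k}})=I_{\overline{k+1}}$ with indices read mod $3$, applying this at each vertex of a class gives
\begin{align}\label{eq:taucompat}
\tau_{*}\circ\mu_{\overline{k}}=\mu_{\overline{k+1}}\circ\tau_{*}\qquad(k=0,1,2).
\end{align}
The computation is the base identity $\mu_{\overline{0}}(B(0))=\tau_{*}B(0)$, that is, simultaneously mutating $B$ at every vertex of $I_{\overline{0}}=3\Z$ slides the quiver by one; this is what Figure~1 displays, and it is the only point at which the explicit form \eqref{B-LV} is used. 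Conjugating it by $\tau_{*}^{\,u}$ via \eqref{eq:taucompat} yields the shifted family
\begin{align}\label{eq:shiftedbase}
\mu_{\overline{u}}\!\left(\tau_{*}^{\,u}B(0)\right)=\tau_{*}^{\,u+1}B(0)\qquad(u\in\Z),
\end{align}
where $\overline{u}$ abbreviates $\overline{u\bmod 3}$.

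Before using these I would settle the one well-definedness point, which is not vacuous: the composite $\mu_{\overline{k}}$ is order-independent and involutive precisely when $I_{\overline{k}}$ carries no internal arrows, and in $B(0)$ itself this fails for $I_{\overline{1}}$, since the $\delta_{i,\pm 3}$ terms of \eqref{B-LV} join consecutive vertices of $3\Z+1$. The saving observation is that the recursion \eqref{Bu-LV} only ever mutates a class that is arrow-free in the matrix at hand. Indeed, every composite in \eqref{eq:shiftedbase} and in the recursion is of the form $\mu_{\overline{k}}$ applied to some $\tau_{*}^{\,m}B(0)$; for $i,j\in I_{\overline{k}}$ one has $(\tau_{*}^{\,m}B(0))_{ij}=b(0)_{i-m,\,j-m}$ with $i-m,j-m\in I_{\overline{k-m}}$, so independence reduces to that of $I_{\overline{k-m}}$ in $B(0)$. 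One checks from \eqref{B-LV} that $I_{\overline{0}}$ and $I_{\overline{2}}$ are arrow-free while only $I_{\overline{1}}$ is not; and in the recursion the combination $k-m$ is always $\equiv 0$ (forward steps) or $\equiv 2$ (backward steps) mod $3$, never $1$. Hence each composite mutation is unambiguous and squares to the identity exactly where it is applied.

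The induction is then routine in both directions. Upward, if $B(u)=\tau_{*}^{\,u}B(0)$ then $B(u+1)=\mu_{\overline{u}}(B(u))=\tau_{*}^{\,u+1}B(0)$ by \eqref{eq:shiftedbase}. Downward, \eqref{Bu-LV} sets $B(u)=\mu_{\overline{u}}(B(u+1))$; applying the (involutive) composite to \eqref{eq:shiftedbase} gives $\mu_{\overline{u}}(\tau_{*}^{\,u+1}B(0))=\tau_{*}^{\,u}B(0)$, so $B(u+1)=\tau_{*}^{\,u+1}B(0)$ yields $B(u)=\tau_{*}^{\,u}B(0)$. This proves \eqref{eq:strongform}, and with it the Lemma. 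The sole real obstacle is the base identity $\mu_{\overline{0}}(B(0))=\tau_{*}B(0)$: although visibly true from Figure~1, checking it from \eqref{B-LV} means carrying out the simultaneous mutation at the independent set $3\Z$ and matching every entry against the shift --- a finite verification by $3$-periodicity, but the one step that is computational rather than formal.
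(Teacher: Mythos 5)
Your proof is correct, and it does considerably more than the paper itself, whose entire argument is the sentence introducing the Lemma: the identities are ``easy to see by Figure 1.'' Your argument is a formalization of exactly that figure-inspection. The underlying observation is the same---each composite mutation slides the quiver by one vertex---but you organize it as: (a) equivariance of matrix mutation under relabelling, $\mu_{\pi(k)}(\pi_{*}B)=\pi_{*}\mu_{k}(B)$, which indeed follows entrywise from \eqref{B-mutation}; (b) a single base identity $\mu_{\overline{0}}(B(0))=\tau_{*}B(0)$; (c) a two-sided induction giving $B(u)=\tau_{*}^{\,u}B(0)$, from which all three identities of the Lemma follow using the $3$-periodicity $b_{i+3,j+3}=b_{i,j}$ of $B(0)$ noted in the text. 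This buys two things the paper leaves implicit. First, an infinite family of assertions is reduced to one finite computation (finite by $3$-periodicity), namely the step the figure displays as $Q(0)\mapsto Q(1)$; you leave that check to the figure, but since this is precisely the paper's own standard of rigor at this point, no gap is introduced beyond what the paper already accepts. Second---and this is a genuinely valuable point the paper passes over silently---you verify that the composite mutations are well defined where they are actually used: in $B(0)$ the class $I_{\overline{1}}$ is \emph{not} arrow-free (the $\delta_{i,\pm3}$ terms in \eqref{B-LV}), so $\mu_{\overline{1}}$ would be ambiguous on $B(0)$ itself; but the recursion \eqref{Bu-LV} only ever mutates a class whose internal entries reduce, under the shift, to the arrow-free classes $I_{\overline{0}}$ (forward steps) or $I_{\overline{2}}$ (backward steps) of $B(0)$, so every composite actually performed is order-independent and involutive. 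The only overstatement is the ``precisely when'' in your well-definedness criterion (order-independence is not literally equivalent to the absence of internal arrows); since only the ``if'' direction is used, this is harmless.
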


In particular, \eqref{eq:b13} means that
the concatenation of the sequence
$(\ldots, -3, 0, 3, \ldots)$, $(\ldots, -2, 1, 4,\ldots)$, 
$(\ldots, -1, 2, 5, \ldots)$
is a natural infinite analogue of a {\em regular period} of 
$B=B(0)$ in the terminology of \cite{N10}.
(Also, the property \eqref{eq:b12} is a natural infinite analogue of the 
{\it mutation periodicity} of $B$ in the terminology of \cite{FordyMarsh09}.)

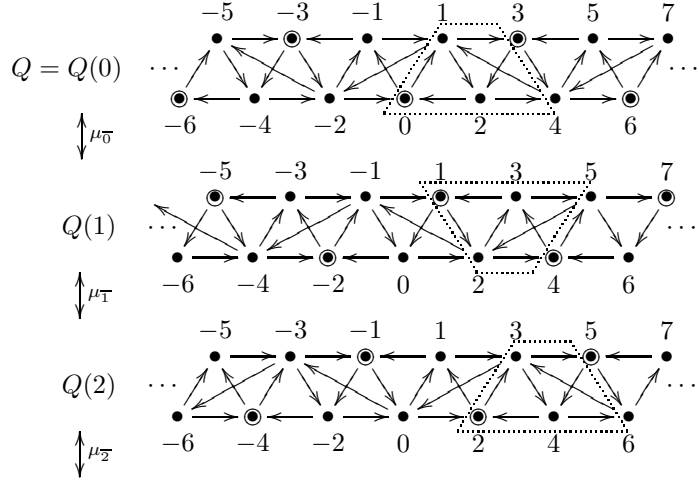
\begin{figure}\label{fig:Q-cluster}
\begin{math}
%
% Q(0)
%%%%%%%%%%%%%%%%%%%%%%%%%%%%%%%
\begin{xy}
%<2.4pt,0pt>:
(0,0)*{\bullet},+/d10pt/*{ -6},
(5,8)*{\bullet},+/u10pt/*{-5},
(10,0)*{\bullet},+/d10pt/*{-4},
(15,8)*{\bullet},+/u10pt/*{-3},
(20,0)*{\bullet},+/d10pt/*{-2},
(25,8)*{\bullet},+/u10pt/*{-1},
(30,0)*{\bullet},+/d10pt/*{0},
(35,8)*{\bullet},+/u10pt/*{1},
(40,0)*{\bullet},+/d10pt/*{2},
(45,8)*{\bullet},+/u10pt/*{3},
(50,0)*{\bullet},+/d10pt/*{4},
(55,8)*{\bullet},+/u10pt/*{5},
(60,0)*{\bullet},+/d10pt/*{6},
(65,8)*{\bullet},+/u10pt/*{7},
(0,0)*\cir<3pt>{},
(15,8)*\cir<3pt>{},
(30,0)*\cir<3pt>{},
(45,8)*\cir<3pt>{},
(60,0)*\cir<3pt>{},
%
%(-4,4)*{\circle*{1}},
(-2,4)*{\cdots},
%(0,4)*{\circle*{1}},
%(65,4)*{\circle*{1}},
(67,4)*{\cdots},
%(69,4)*{\circle*{1}},
%
(-15,4)*{Q=Q(0)},
\ar (1,1.6);(4,6.4)
\ar (21,1.6);(24,6.4)
\ar (31,1.6);(34,6.4)
\ar (51,1.6);(54,6.4)
\ar (61,1.6);(64,6.4)
\ar (14,6.4);(11,1.6)
\ar (44,6.4);(41,1.6)
\ar (6,6.4);(9,1.6)
\ar (16,6.4);(19,1.6)
\ar (36,6.4);(39,1.6)
\ar (46,6.4);(49,1.6)
\ar (29,1.6);(26,6.4)
\ar (59,1.6);(56,6.4)
\ar (8,0);(2,0)
\ar (23,8);(17,8)
\ar (38,0);(32,0)
\ar (53,8);(47,8)
\ar (7,8);(13,8)
\ar (12,0);(18,0)
\ar (22,0);(28,0)
\ar (27,8);(33,8)
\ar (37,8);(43,8)
\ar (42,0);(48,0)
\ar (52,0);(58,0)
\ar (57,8);(63,8)
\ar (18,1);(7,7)
\ar (48,1);(37,7)
\ar (33,7);(22,1)
\ar (63,7);(52,1)
\ar@{.}(27.25,-2);(34.75,10) %¶
\ar@{.}(49.75,-2);(42.25,10) %‰E
\ar@{.}(27.25,-2);(49.75,-2)
\ar@{.}(34.75,10);(42.25,10)
\ar@{<->}^{\mu_{\overline{0}}} (-13,-2);(-13,-8)
\end{xy}
\end{math}
%%%%%%%%%%%%%%%%%%%%%%%
%
% Q(1)
%%%%%%%%%%%%%%%%%%%%%%%%%%%%%%%
\begin{math}
\begin{xy}
%<2.4pt,0pt>:
(0,0)*{\bullet},+/d10pt/*{ -6},
(5,8)*{\bullet},+/u10pt/*{-5},
(10,0)*{\bullet},+/d10pt/*{-4},
(15,8)*{\bullet},+/u10pt/*{-3},
(20,0)*{\bullet},+/d10pt/*{-2},
(25,8)*{\bullet},+/u10pt/*{-1},
(30,0)*{\bullet},+/d10pt/*{0},
(35,8)*{\bullet},+/u10pt/*{1},
(40,0)*{\bullet},+/d10pt/*{2},
(45,8)*{\bullet},+/u10pt/*{3},
(50,0)*{\bullet},+/d10pt/*{4},
(55,8)*{\bullet},+/u10pt/*{5},
(60,0)*{\bullet},+/d10pt/*{6},
(65,8)*{\bullet},+/u10pt/*{7},
(5,8)*\cir<3pt>{},
(20,0)*\cir<3pt>{},
(35,8)*\cir<3pt>{},
(50,0)*\cir<3pt>{},
(65,8)*\cir<3pt>{},
%
%(-4,4)*{\circle*{1}},
(-2,4)*{\cdots},
%(0,4)*{\circle*{1}},
%(65,4)*{\circle*{1}},
(67,4)*{\cdots},
%(69,4)*{\circle*{1}},
%
(-15,4)*{\phantom{Q=}Q(1)},
\ar (11,1.6);(14,6.4)
\ar (21,1.6);(24,6.4)
\ar (41,1.6);(44,6.4)
\ar (51,1.6);(54,6.4)
\ar (4,6.4);(1,1.6)
\ar (34,6.4);(31,1.6)
\ar (64,6.4);(61,1.6)
\ar (6,6.4);(9,1.6)
\ar (26,6.4);(29,1.6)
\ar (36,6.4);(39,1.6)
\ar (56,6.4);(59,1.6)
\ar (19,1.6);(16,6.4)
\ar (49,1.6);(46,6.4)

\ar (13,8);(7,8)
\ar (28,0);(22,0)
\ar (43,8);(37,8)
\ar (58,0);(52,0)
\ar (2,0);(8,0)
\ar (12,0);(18,0)
\ar (17,8);(23,8)
\ar (27,8);(33,8)
\ar (32,0);(38,0)
\ar (42,0);(48,0)
\ar (47,8);(53,8)
\ar (57,8);(63,8)
\ar (8,1);(-3,7)
\ar (38,1);(27,7)
\ar (23,7);(12,1)
\ar (53,7);(42,1)
\ar@{.}(47.25,-2);(54.75,10)
\ar@{.}(39.75,-2);(32.25,10)
\ar@{.}(39.75,-2);(47.25,-2)
\ar@{.}(32.25,10);(54.75,10)
\ar@{<->}^{\mu_{\overline{1}}} (-13,-2);(-13,-8)
\end{xy}
\end{math}
%
% Q(2)
%%%%%%%%%%%%%%%%%%%%%%%%%%%%%%%
\begin{math}
\begin{xy}
%<2.4pt,0pt>:
(0,0)*{\bullet},+/d10pt/*{ -6},
(5,8)*{\bullet},+/u10pt/*{-5},
(10,0)*{\bullet},+/d10pt/*{-4},
(15,8)*{\bullet},+/u10pt/*{-3},
(20,0)*{\bullet},+/d10pt/*{-2},
(25,8)*{\bullet},+/u10pt/*{-1},
(30,0)*{\bullet},+/d10pt/*{0},
(35,8)*{\bullet},+/u10pt/*{1},
(40,0)*{\bullet},+/d10pt/*{2},
(45,8)*{\bullet},+/u10pt/*{3},
(50,0)*{\bullet},+/d10pt/*{4},
(55,8)*{\bullet},+/u10pt/*{5},
(60,0)*{\bullet},+/d10pt/*{6},
(65,8)*{\bullet},+/u10pt/*{7},
%
%(5,8)*\cir<3pt>{},
(10,0)*\cir<3pt>{},
(25,8)*\cir<3pt>{},
(40,0)*\cir<3pt>{},
(55,8)*\cir<3pt>{},
%
%(-4,4)*{\circle*{1}},
(-2,4)*{\cdots},
%(0,4)*{\circle*{1}},
%(65,4)*{\circle*{1}},
(67,4)*{\cdots},
%(69,4)*{\circle*{1}},
%
(-15,4)*{\phantom{Q=}Q(2)},
\ar (1,1.6);(4,6.4)
\ar (11,1.6);(14,6.4)
\ar (31,1.6);(34,6.4)
\ar (41,1.6);(44,6.4)
\ar (61,1.6);(64,6.4)
\ar (24,6.4);(21,1.6)
\ar (54,6.4);(51,1.6)
\ar (16,6.4);(19,1.6)
\ar (26,6.4);(29,1.6)
\ar (46,6.4);(49,1.6)
\ar (56,6.4);(59,1.6)
\ar (9,1.6);(6,6.4)
\ar (39,1.6);(36,6.4)
\ar (18,0);(12,0)
\ar (33,8);(27,8)
\ar (48,0);(42,0)
\ar (63,8);(57,8)
\ar (2,0);(8,0)
\ar (7,8);(13,8)
\ar (17,8);(23,8)
\ar (22,0);(28,0)
\ar (32,0);(38,0)
\ar (37,8);(43,8)
\ar (47,8);(53,8)
\ar (52,0);(58,0)
\ar (28,1);(17,7)
\ar (58,1);(47,7)
\ar (13,7);(2,1)
\ar (43,7);(32,1)
\ar@{.}(37.25,-2);(44.75,10)
\ar@{.}(59.75,-2);(52.25,10)
\ar@{.}(37.25,-2);(59.75,-2)
\ar@{.}(44.75,10);(52.25,10)
\ar@{<->}^{\mu_{\overline{2}}} (-13,-2);(-13,-8)
\end{xy}
\end{math}
\caption{Quiver $Q$. The encircled vertices are the forward mutation points. 
%The regions inside the dotted lines
%are the fundamental regions for the translational invariance
%of the quivers.
}
\end{figure}

In general, with such a periodicity of the matrix $B$
one can associate the T-system and Y-system \cite[Proposition 5.11]{N10} 
as follows:
in the same manner as $B(u)$, we define $I$-tuples $x(u)$ and $y(u)$  
for $u \in \Z$ by 
\begin{align}\label{LV-seq}
\begin{split}
\cdots
&\stackrel{\mu_{\overline{1}}}{\longleftrightarrow} (B(-1),x(-1),y(-1)) 
\stackrel{\mu_{\overline{2}}}{\longleftrightarrow} (B(0),x(0),y(0))
\\
&\stackrel{\mu_{\overline{0}}}{\longleftrightarrow} (B(1),x(1),y(1))
\stackrel{\mu_{\overline{1}}}{\longleftrightarrow} (B(2),x(2),y(2))
\stackrel{\mu_{\overline{2}}}{\longleftrightarrow} (B(3),x(3),y(3)) 
\stackrel{\mu_{\overline{0}}}{\longleftrightarrow} \cdots
\end{split}
\end{align}
Strictly speaking, $(B(u),x(u),y(u))$ is not a seed of $\mathcal{A} 
(B,x,y)$,
since it is obtained only through an {\em infinite\/} sequence of  
mutations.
However, it is well defined; furthermore each
$x_i(u)$ and $y_i(u)$ are respectively a cluster variable
and a coefficient of $\mathcal{A}(B,x,y)$,
because they are obtained through some {\em finite} subsequence of  
mutations in \eqref{LV-seq}
due to the locality of the exchange relations.

Let $P_+ = \{(u,i) \in \Z^2 ~|~ u \equiv i \mod 3 \}$ be 
the set of the {\it forward mutation points} as depicted in Figure 1.
Then, thanks to \eqref{y-mutation} and \eqref{x-mutation},
we have the following relations among $x_i(u)$ and $y_i(u)$ with 
$(u,i) \in P_+$:
\begin{align}
\label{x-rel}
  &x_i(u) x_i(u+3)
  =
    \frac{y_i(u) x_{i-2}(u+1)x_{i+2}(u+2) + x_{i-1}(u+2)x_{i+1}(u+1)}
       {1+ y_i(u)},
  \\
\label{y-rel}
  &y_i\left(u \right)y_i\left(u+3 \right)
  =
  \frac{
  \displaystyle
  (1+y_{i-2}(u+1))(1+y_{i+2}(u+2)) 
  }
  {
  \displaystyle
  (1+y_{i+1}(u+1)^{-1})(1+y_{i-1}(u+2)^{-1})
  }.
\end{align}
The relations \eqref{x-rel} and \eqref{y-rel} are respectively called
the T-system and the Y-system for the sequence \eqref{Bu-LV}.
Further, by following \cite[Proposition 3.9]{FZ07} we define
$\hat{y}_i(u)$ by
\begin{align}\label{yhat}
\hat{y}_i(u) 
= y_i(u) \frac{x_{i-2}(u+1)x_{i+2}(u+2)}{x_{i-1}(u+2)x_{i+1}(u+1)},
\end{align}
for $(u,i) \in P_+$. Then $\hat{y}_i(u)$ again satisfies the relation 
\eqref{y-rel}, {\it i.e.},
\begin{align}
\label{yhat-rel}
  &\hat{y}_i\left(u \right)\hat{y}_i\left(u+3 \right)
  =
  \frac{
  \displaystyle
  (1+\hat{y}_{i-2}(u+1))(1+\hat{y}_{i+2}(u+2)) 
  }
  {
  \displaystyle
  (1+\hat{y}_{i+1}(u+1)^{-1})(1+\hat{y}_{i-1}(u+2)^{-1})
  }.
\end{align}
Note that \eqref{yhat-rel} is equivalent to 
\begin{align}\label{yhat-rel22}
\frac{\hat{y}_{i-1}\left(u+2 \right)}
{\hat{y}_i\left(u \right)}
\frac{1+\hat{y}_{i-2}(u+1)}
{1+\hat{y}_{i+1}(u+1)}
=
\frac{\hat{y}_{i}\left(u+3 \right)}
{\hat{y}_{i+1}\left(u+1 \right)}
\frac{1+\hat{y}_{i-1}(u+2)}
{1+\hat{y}_{i+2}(u+2)}.
\end{align} 
   
When we take the constant solution $y_i(u)=\delta ~(\delta \in \Q)$
of \eqref{y-rel},
\eqref{x-rel} reduces to 
\begin{align}\label{rel-x2}
x_i(u)x_i(u+3)
=
\frac{\delta x_{i-2}(u+1)x_{i+2}(u+2) + x_{i-1}(u+2)x_{i+1}(u+1)}
     {1+\delta},
\end{align}
and \eqref{yhat-rel22} reduces to
\begin{align}\label{yhat-rel2}
\frac{\hat{y}_{i-1}\left(u+2 \right)}
{\hat{y}_i\left(u \right)}
\frac{1+\hat{y}_{i-2}(u+1)}
{1+\hat{y}_{i+1}(u+1)}
=
\frac{\hat{y}_{i}\left(u+3 \right)}
{\hat{y}_{i+1}\left(u+1 \right)}
\frac{1+\hat{y}_{i-1}(u+2)}
{1+\hat{y}_{i+2}(u+2)}
= 1.
\end{align} 
Via the identification
$$
  x_i(u) = \tau_n^t, \quad \hat{y}_i(u) = \delta u_n^t,
$$  
with the coordinate transformation 
\begin{align}\label{tn-ui}
  t=\frac{1}{3}(2u+i), \quad n=\frac{1}{3}(u-i),
\end{align}
we see that \eqref{rel-x2}, \eqref{yhat} and \eqref{yhat-rel2} 
are nothing but \eqref{eq:T1}, \eqref{u-tau} and \eqref{eq:U1} respectively.

%%%%%%%%%%%%%%%%%%%%%%%%%%%%%%%%%%%%%%%%
\subsection{Discrete Liouville equation}
\label{subsec:d-Liu}
%%%%%%%%%%%%%%%%%%%%%%%%%%%%%%%%%%%%%%%%

Fix $N \in \Z_{>1}$.
The $N$-periodic {\em discrete Liouville equation} 
is given by \cite{FV99, FKV01}
\begin{align}\label{d-Liu}
\chi_{n,t+1} \chi_{n,t-1} = (1+\chi_{n-1,t})(1+\chi_{n+1,t}),
\end{align}
where $\chi_{n,t}$ is a function of $(n,t) \in (\Z / N \Z,\Z)$.

When $N=2m$, this equation is formulated by the cluster algebra
of type $A_{2m-1}^{(1)}$ as follows \cite{FZ07}.
Let $Q$ be the quiver of the Dynkin diagram of type $A_{2m-1}^{(1)}$
in Figure 2 (a),
and let $I = \{0,1,\ldots,2m-1\}$ be the index set of $Q$.
The quiver $Q$ is bipartite,
and we set $I_+ = \{i \in I:\mathrm{even}\}$, 
$I_- = \{i \in I:\mathrm{odd}\}$. 
The corresponding matrix $B=(b_{ij})_{i,j \in I}$ 
is skew-symmetric and given by 
\begin{align}
  b_{2k,i} = - \delta_{2k-1,i} - \delta_{2k+1,i}, 
  \quad 
  b_{2k+1,i} =  \delta_{2k,i} + \delta_{2k+2,i},
\end{align}
where the indices $i,j$ of $b_{i,j}$ is in $\Z / 2m \Z$.
This $B$ is $2$-periodic, {\it i.e.}, $b_{i+2,j+2} = b_{i,j}$.

By using the composite mutations $\mu_+ = \prod_{i \in I_+} \mu_i$
and $\mu_- = \prod_{i \in I_-} \mu_i$, we define seed
$(B(u), x(u),y(u))$ for $u \in \Z$ by
\begin{align}\label{Liu-Bxy}
\begin{split}
\cdots
&\stackrel{\mu_{+}}{\longleftrightarrow} (B(-1),x(-1),y(-1)) 
\stackrel{\mu_{-}}{\longleftrightarrow} (B(0),x(0),y(0))
\\
&\stackrel{\mu_{+}}{\longleftrightarrow} (B(1),x(1),y(1))
\stackrel{\mu_{-}}{\longleftrightarrow} (B(2),x(2),y(2))
\stackrel{\mu_{+}}{\longleftrightarrow} \cdots
\end{split}
\end{align}
by starting $B(0) = B$, $x(0) = x$ and $y(0) = y$.
We have the periodicity of $B(u)$ as $B(u+2) = B(u)$.
Let $P_+ = \{(u,i) ~|~ i+u:\text{even} \}$ be the set of forward mutation
points. Again, one can associate the T- and Y-systems for $x_i(u)$ and 
$y_i(u)$ with $(u,i) \in P_+$.
Then the exchange relations \eqref{x-mutation} and \eqref{y-mutation} become
\begin{align}
  &x_i(u+2) x_i(u) = \frac{y_i(u)x_{i+1}(u+1) x_{i-1}(u+1)+1}{1+y_{i}(u)},
  \\
  \label{Liu-y-mutation}
  &y_i(u+2) y_i(u) = (1+y_{i+1}(u+1))(1+y_{i-1}(u+1)),
\end{align}
for $i \in \Z/2m \Z$.
One sees that \eqref{Liu-y-mutation} is nothing but \eqref{d-Liu}
via the identification $y_i(u) = \chi_{i,u}$.

When $N=2m+1$, the equation is formulated by the cluster algebra
of type $A_{4m+1}^{(1)}$ \cite[\S 6.4.2]{KNS09}.
(One may naturally think the cluster algebra of type $A_{2m}^{(1)}$,
but it does not work because the quiver of type $A_{2m}^{(1)}$
is not bipartite.)
Let $Q$ be the quiver of the Dynkin diagram $Q$ of type $A_{4m+1}^{(1)}$
as Figure 2 (b).
Let $I = \{0_+,1_+,\ldots,2m_+,0_-,1_-,\ldots,2m_-\}$ 
be an index set of $Q$,
and set $I_+ = \{0_+,1_+,\ldots, 2m_+\}$, $I_- = \{0_-,1_-, \ldots,2m_-\}$.
Then the corresponding matrix $B = (b_{ij})_{i,j \in I}$ is given by
\begin{align*}
  b_{k_+,i} = - \delta_{k-1_-,i} - \delta_{k+1_-,i}, 
  \quad 
  b_{k_-,i} =  \delta_{k+1_+,i} + \delta_{k-1_+,i}.
\end{align*}

By using the composite mutations $\mu_+ = \prod_{i \in I_+} \mu_i$
and $\mu_- = \prod_{i \in I_-} \mu_i$, we define seeds $(B(u),x(u),y(u))$
for $u \in \Z$ in the same way as \eqref{Liu-Bxy}. 
Then we obtain the exchange relations:
\begin{align}
  &x_{i_\pm}(u+2) x_{i_\pm}(u) = 
  \frac{y_{i_\pm}(u)x_{i+1_\mp}(u+1) x_{i-1_\mp}(u+1)+1}{1+y_{i_\pm}(u)},
  \\
  \label{Liu-y-mutationb}
  &y_{i_\pm}(u+2) y_{i_\pm}(u) = (1+y_{i+1_\mp}(u+1))(1+y_{i-1_\mp}(u+1)),
\end{align}
for $i \in \Z / (2m+1) \Z$.
One sees that \eqref{Liu-y-mutationb} becomes \eqref{d-Liu}
via the identification 
$y_{i_+}(2u)=\chi_{i,2u}, ~y_{i_-}(2u+1) = \chi_{i,2u+1}$
for $(u,i) \in \Z \times \{0,1,\ldots,2m\}$.

\begin{figure}
\label{fig:Q-cluster-Liu}
\begin{math}
\begin{xy}
%%%%%%%%%%%%%%%%%%%%%%%%%%%%%%%
% even case
%<2.4pt,0pt>:
(0,0)*{\bullet},+/d10pt/*{ 1},+/u18pt/*{-},
(10,0)*{\bullet},+/d10pt/*{2},+/u18pt/*{+},
(20,0)*{\bullet},+/d10pt/*{3},+/u18pt/*{-},
(30,0)*{\bullet},+/d10pt/*{},
(40,0)*{\bullet},+/d10pt/*{2m-1},+/u18pt/*{-},
(20,10)*{\bullet},+/u10pt/*{0},+/d18pt/*{+},
%
%%%%%%%%%%%%%%%%%%%%%%%%%%%%%%%%%%%%
% odd case
(60,0)*{\bullet},+/d10pt/*{ 1_-},
(70,0)*{\bullet},+/d10pt/*{2_+},
(80,0)*{\bullet},+/d10pt/*{3_-},
(90,0)*{\bullet},+/d10pt/*{4_+},
(100,0)*{\bullet},+/d10pt/*{},
(110,0)*{\bullet},+/d10pt/*{2m_+},
(60,10)*{\bullet},+/l10pt/*{0_+},
(60,20)*{\bullet},+/u10pt/*{ 2m_-},
(70,20)*{\bullet},+/u10pt/*{},
(80,20)*{\bullet},+/u10pt/*{4_-},
(90,20)*{\bullet},+/u10pt/*{3_+},
(100,20)*{\bullet},+/u10pt/*{2_-},
(110,20)*{\bullet},+/u10pt/*{1_+},
(110,10)*{\bullet},+/r12pt/*{0_-},
%
%
%(-4,4)*{\circle*{1}},
(30,-3.5)*{\cdots},
(100,-3.5)*{\cdots},
(70,23.5)*{\cdots},
%(0,4)*{\circle*{1}},
%(65,4)*{\circle*{1}},
%(69,4)*{\circle*{1}},
%
(20,-10)*{\mbox{(a)}},
(80,-10)*{\mbox{(b)}},
\ar (2,0);(8,0)
\ar (18,0);(12,0)
\ar (22,0);(28,0)
\ar (38,0);(32,0)
\ar (2,1);(18,9)
\ar (38,1);(22,9)
%%%%%%%
\ar (62,0);(68,0)
\ar (78,0);(72,0)
\ar (82,0);(88,0)
\ar (98,0);(92,0)
\ar (102,0);(108,0)
\ar (62,20);(68,20)
\ar (78,20);(72,20)
\ar (82,20);(88,20)
\ar (98,20);(92,20)
\ar (102,20);(108,20)
\ar (60,2);(60,8)
\ar (60,18);(60,12)
\ar (110,8);(110,2)
\ar (110,12);(110,18)
\end{xy}
\end{math}
\caption{(a) Quiver $Q$ for $N=2m$.
(b) Quiver $Q$ for $N=2m+1$.
}
\end{figure}
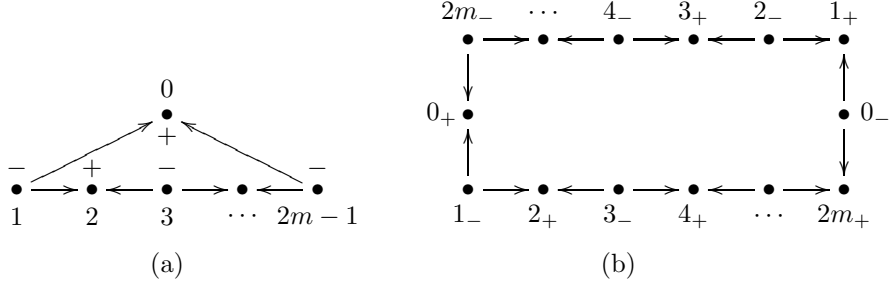

%%%%%%%%%%%%%%%%%%%%%%%%%%%%%%%%%%%%%%%%%%%%%%
\subsection{Integrability by categorification}
\label{subsec:categorification}
%%%%%%%%%%%%%%%%%%%%%%%%%%%%%%%%%%%%%%%%%%%%%%

Here we explain, quite briefly, what we mean by
 `integrability
by categorification', which is  mentioned in the introduction,
though we do not use this idea in the rest of the paper.

The categorification of cluster algebras by (generalized)
cluster categories has been recently developed by
several authors
\cite{Caldero06,Buan06, DK,FK, Ke10, A, Ke2,Plamondon10a,Plamondon10b}.
What we  propose is  that one can view such a  categorification also as
a formal method to solve the initial value problem of
the associated T and Y-systems.
Below we concentrate on the case where {\em $I$ is finite
and $B$ is skew-symmetric}.
(For the rest, parallel results in this subsection
are not proved yet, though expected to hold.)

First, let us recall the following
very fundamental fact proved in \cite{FZ07}.

\begin{Theorem}[\cite{FZ07}]
\label{thm:CFG}
 For each seed $(B',x',y')$ of a given
cluster algebra $\mathcal{A}(B,x,y)$,
there exist polynomials $F'_i(y)$  of $y$ $(i\in I)$
and a pair of integer matrices $C'=(c'_{ij})_{i,j\in I}$
and  $G'=(g'_{ij})_{i,j\in I}$ such that the following
formulas hold:
\begin{align}
y'_i &= \left(\prod_{j\in I} y_j^{c'_{ji}}\right)
\prod_{j\in I} F'_j(y_1,\dots,y_n)^{b'_{ji}},
\\ 
\label{x-F}
x'_i &= \left(\prod_{j\in I} y_j^{g'_{ji}}\right)
\frac{F'_i(\hat{y}_1,\dots,\hat{y}_n)}
{F'_i(y_1,\dots,y_n)},\quad
\hat{y}_i = y_i \prod_{j\in I} x_j ^{b_{ji}}.
\end{align}
%where 
% $F'_i(y_1,\dots,y_n)_{\oplus}$
%is the one obtained from 
%$F'_i(y_1,\dots,y_n)$ by replacing$+$
%with$\oplus$ in $\mathbb{P}_{\mathrm{univ}}(y)$.
\end{Theorem}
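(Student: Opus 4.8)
The plan is to prove both identities by a single induction on the distance from the initial vertex $t_0$ in the tree $\mathbb{T}_I$, taking as a universal model the cluster algebra $\mathcal{A}_{\bullet}$ with \emph{principal coefficients}: here one sets $\PP = \mathrm{Trop}(y_1,\dots,y_n)$, the tropical semifield on the initial coefficients, and keeps the same initial exchange matrix $B$ and cluster $x$. In this model each cluster variable $X'_i$ is a genuine Laurent polynomial in $x_1,\dots,x_n$ and $y_1,\dots,y_n$, and the three objects of the statement can all be extracted from the $X'_i$: the $F$-polynomial is the specialization $F'_i(y) := X'_i\big|_{x_1=\dots=x_n=1}$, the columns of $C'$ are the exponent vectors of the tropical coefficients, and the columns of $G'$ are the degrees of the $X'_i$ for the grading introduced below.

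First I would pin down $C'$. Because $\PP$ is tropical one has $1\oplus y_k = 1$, so the coefficient rule \eqref{y-mutation} becomes piecewise-linear in the exponents; hence each coefficient $Y'_i$ of $\mathcal{A}_{\bullet}$ is a Laurent monomial $\prod_j y_j^{c'_{ji}}$, and the columns $c'_i$ satisfy the recursion obtained by tropicalizing \eqref{y-mutation}. To obtain $G'$, I would equip $\mathcal{A}_{\bullet}$ with the $\Z^n$-grading given by $\deg x_i = e_i$ (the standard basis vector) and $\deg y_j = -(b_{1j},\dots,b_{nj})$, the negative of the $j$-th column of $B$. Since $1\oplus y_k=1$ in this model, a direct check shows that the exchange relation \eqref{x-mutation} is homogeneous for this grading, so every $X'_i$ is homogeneous; one sets $g'_i := \deg X'_i$. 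Finally $F'_i$ is defined by the specialization above and, because $\hat y_j = y_j\prod_i x_i^{b_{ij}}$ has degree $0$, homogeneity yields $X'_i = (\text{monomial of degree } g'_i)\cdot F'_i(\hat y)$, which is the principal-coefficient form of \eqref{x-F}.

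With $C'$, $G'$ and $F'$ in hand, both displayed formulas are established by verifying they are preserved under a single mutation $\mu_k$, using \eqref{B-mutation}, \eqref{y-mutation} and \eqref{x-mutation} together with the mutation \eqref{B-mutation} of $B'$. The inductive step splits into three compatible recursions — for the columns of $C'$, for those of $G'$, and for the polynomials $F'_i$ — each read off from the corresponding mutation rule. The coefficient formula is then the statement that $y'_i$ factors as the tropical monomial $\prod_j y_j^{c'_{ji}}$ times the $F$-polynomial contribution $\prod_j F'_j(y)^{b'_{ji}}$ evaluated in $\PP$; this factorization, the \emph{separation of additions}, is exactly what the induction verifies.

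The hard part will be the cluster-variable formula \eqref{x-F}, and precisely the handling of the $\hat y$-substitution. One must show that the genuine cluster variable $x'_i$ in $\mathcal{A}(B,x,y)$ equals the evaluation of $X'_i$ in the ambient field $\Q\PP(u)$ divided by the evaluation $F'_i(y)$ of the $F$-polynomial \emph{in the semifield} $\PP$ (i.e.\ using $\oplus$), and that this quotient collapses to the $g$-vector monomial times $F'_i(\hat y)/F'_i(y)$. The technical crux is that the two additions must be disentangled: the ordinary $+$ inside the polynomials $F'_i$ governs the numerator $F'_i(\hat y)$, while the semifield $\oplus$ governs the denominator $F'_i(y)$, and one must check that the denominator produced by iterating \eqref{x-mutation} in $\PP$ is exactly the $\oplus$-evaluation of $F'_i$. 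Establishing this coincidence and its stability under each mutation is where the bulk of the work lies; once it is in place, \eqref{x-F} follows by substituting the homogeneous form of $X'_i$ and using $\deg \hat y_j = 0$.
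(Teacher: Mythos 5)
Your proposal is correct, and it is essentially the Fomin--Zelevinsky argument itself --- principal coefficients over $\mathrm{Trop}(y_1,\dots,y_n)$, $F$-polynomials as specializations at $x_1=\dots=x_n=1$, $c$-vectors from the tropical coefficients, $g$-vectors from the $\Z^n$-grading with $\deg x_i = e_i$ and $\deg y_j$ the negative of the $j$-th column of $B$, and the separation-of-additions formula proved by induction on mutations --- which is exactly the proof in \cite{FZ07} that this paper invokes for Theorem \ref{thm:CFG} without reproducing it. The only discrepancy is that your derivation (correctly) yields the cluster monomial $\prod_{j\in I} x_j^{g'_{ji}}$ in \eqref{x-F}, whereas the paper's display writes $y_j^{g'_{ji}}$, a misprint of the formula in \cite{FZ07}.
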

Furthermore, it is known that each $F'_i(y)$ has the constant term $1$
\cite{Derksen10, Plamondon10b, Nagao10}.
This means that $y'_i$ has the Laurent expansion in $y$
whose leading monomial is given by 
\begin{align}
[y'_i]_{\mathbf{T}} &:= \left(\prod_{j\in I} y_j^{c'_{ji}}\right),
\end{align}
which we call a {\em tropical coefficient\/} (called
a {\em principal coefficient\/} in \cite{FZ07}).
Indeed, $[y'_i]_{\mathbf{T}}$'s  satisfy the same
exchange relation \eqref{y-mutation} 
for $y'_i$'s but replacing $\oplus$ therein
by the one for the tropical semifield $\mathbb{P}_{\mathrm{trop}}(y)$
of $y$:
\begin{align}
  \prod_{i\in I} y_i ^{a_i}
\oplus
\prod_{i\in I} y_i ^{b_i}
:=
\prod_{i\in I} y_i ^{\min (a_i,b_i)}.
\end{align}
Equivalently, in terms of the matrix $C'$, we have the following
recursion relation between seeds $(B',x',y')$ and 
$(B'',x'',y'')=\mu_k (B',x',y')$
with the initial condition $C=\mathbb{I}$ at $t_0$ \cite{FZ07}:
\begin{align}
\label{eq:cmat}
c''_{ji}=
\begin{cases}
-c'_{ji} & i=k\\
c'_{ji}+[-c'_{jk}]_+ b'_{ki}
& i\neq k, b_{ki} \leq 0 \\
c'_{ji}+[c'_{jk}]_+ b'_{ki}
& i\neq k, b_{ki} \geq 0, \\
\end{cases}
\end{align}
where $[x]_+= x$ for $x\geq 0$ and 0 for $x<0$.
It is also known that the matrices $C$ and $G^{T}$, {\it i.e.}, the transpose
of $G$, are inverse to each other \cite{N10}.

Now let us turn to describe the categorification of $\mathcal{A}(B,x,y)$
following the most recent and
general results by Plamondon.
The  presentation here is a minimal one, and we ask the reader
to refer to \cite{Plamondon10a,Plamondon10b} for details.

To the quiver $Q$ corresponding to $B$,
define the {\em principal extension\/}  $\tilde{Q}$ of $Q$
as the quiver obtained from $Q$ by adding a new vertex $i'$
and an arrow $i'\rightarrow i$ for each $i\in I$.
Thus the set of vertices in $\tilde{Q}$ is
given by $\tilde{I}:=I\sqcup I'$ with $I':=\{i'\mid i\in I\}$.
Using some potential
$W$ on $\tilde{Q}$, one can construct a certain triangulated category
 $\mathcal{C}=\mathcal{C}_{(\tilde{Q},W)}$ called
the {\em (generalized) cluster category}.
Furthermore, to each seed $(B',x',y')$ of $\mathcal{A}(B,x,y)$ at
$t'\in \mathbb{T}_I$,
a certain rigid object $T'=\bigoplus_{i\in \tilde{I}} T'_i$ in $\mathcal{C}$ is associated
so that the following properties hold.

\begin{Theorem}[\cite{Plamondon10a,Plamondon10b}]
\label{thm:Pla}
Let $T=\bigoplus_{i\in \tilde{I}} T_i$
 be the rigid object in $\mathcal{C}$ for the initial seed $(B,x,y)$.
Then, we have the following:
\begin{align}
\tilde{Q}' & = \mbox{the quiver for
$\mathrm{End}_{\mathcal{C}}(T')$},\\
c'_{ij}& = - \mathrm{ind}_{T'}(T_i[1])_j
=\mathrm{ind}^{\mathrm{op}}_{T'}(T_i)_j,\\
\label{eq:gT}
g'_{ij}& =  \mathrm{ind}_{T}(T'_j)_i,\\
\label{eq:FGr}
F'_i(y)&=\sum_{e\in (\mathbb{Z}_{\geq 0})^{\tilde{I}}}
\chi(\mathrm{Gr}_e(\mathrm{Hom}_{\mathcal{C}}(T,T'_i[1])))
\prod_{j\in I}y_j^{e_j}.
\end{align}
Here, $\tilde{Q}'$ is the quiver obtained from $\tilde{Q}$
by the mutation sequence from $t$ to $t'$,
$\mathrm{Gr}_e$ is the quiver Grassmannian
with dimension vector $e=(e_j)
\in (\mathbb{Z}_{\geq 0})^{\tilde{I}}$,
and $\chi$ is the Euler number.
\end{Theorem}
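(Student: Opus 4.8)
The plan is to prove all four identities simultaneously by induction along the regular tree $\mathbb{T}_I$, reducing the categorical statements to the Fomin--Zelevinsky recursions for $(\tilde Q', C', G', F'_i)$ that are already recorded in Theorem \ref{thm:CFG} and \eqref{eq:cmat}. Before the induction can start, one must erect the categorical scaffolding: following Amiot and Plamondon, build $\mathcal{C} = \mathcal{C}_{(\tilde{Q},W)}$ as a suitable subquotient of the derived category of the complete Ginzburg dg algebra of $(\tilde Q, W)$, verify that it is $2$-Calabi--Yau with cluster-tilting object $T$, and establish the bijection between the seeds $(B',x',y')$ reachable from $t_0$ and the rigid objects $T'$ reachable from $T$ under Iyama--Yoshino mutation, so that a single seed mutation $\mu_k$ corresponds to a single object mutation defined through exchange triangles $T'_k \to E \to T''_k \to T'_k[1]$ with $E \in \mathrm{add}(\bigoplus_{i\neq k} T'_i)$. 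Granting this dictionary, the base case at $t_0$ is immediate: there $T' = T$, the index of $T_j$ with respect to $T$ is the $j$-th coordinate vector, giving $G = \mathbb{I}$ and (via the $C = (G^T)^{-1}$ relation of \cite{N10}) $C = \mathbb{I}$; rigidity forces $\mathrm{Hom}_{\mathcal{C}}(T, T_i[1]) = 0$, hence $F_i = 1$; and the quiver of $\mathrm{End}_{\mathcal{C}}(T)$ is $\tilde Q$ by construction.

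For the quiver identity and the $G$-matrix I would show that each categorical quantity mutates by the Fomin--Zelevinsky rule. That $\tilde Q'$ equals the quiver of $\mathrm{End}_{\mathcal{C}}(T')$ is the statement that Iyama--Yoshino mutation of cluster-tilting objects realizes quiver mutation (the Buan--Iyama--Reiten--Smith and Keller--Reiten mechanism, extended to this generality by Plamondon); its inductive step is a direct $\mathrm{Hom}$-space computation across the exchange triangle. For $G$, the key is additivity of the index on triangles: writing $T''_k$ through its exchange triangle and applying $\mathrm{ind}_T$, one checks that $\mathrm{ind}_T(T'_j)$ transforms exactly as the g-vectors do under $\mu_k$, so $g'_{ij} = \mathrm{ind}_T(T'_j)_i$ propagates along the tree. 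This rests on $\mathrm{ind}_T$ being well defined in the split Grothendieck group of $\mathrm{add}(T)$ and compatible with the triangulated structure.

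The $C$-matrix identity I would then extract from the $G$-matrix one together with the index--coindex duality: the coindex $\mathrm{ind}^{\mathrm{op}}_{T'}(T_i)$ pairs with $\mathrm{ind}_T(T'_j)$ so as to realize the inverse relation $C = (G^T)^{-1}$ of \cite{N10}, pinning down $c'_{ij} = \mathrm{ind}^{\mathrm{op}}_{T'}(T_i)_j = -\mathrm{ind}_{T'}(T_i[1])_j$; alternatively one checks directly that $-\mathrm{ind}_{T'}(T_i[1])$ obeys the recursion \eqref{eq:cmat} with initial value $\mathbb{I}$. Finally, for the $F$-polynomials I would invoke the Caldero--Chapoton/Palu cluster character together with the Fu--Keller identification: the cluster variable $x'_i$ equals the cluster character $X_{T'_i}$, whose $y$-grading reads off $F'_i(y) = \sum_e \chi(\mathrm{Gr}_e(\mathrm{Hom}_{\mathcal{C}}(T, T'_i[1]))) \prod_j y_j^{e_j}$, and matching with the separation formula \eqref{x-F} (using the already-established $g'_{ij}$) identifies this generating function with $F'_i$.

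The hard part will be twofold. First, the entire foundational layer --- existence and the $2$-Calabi--Yau property of $\mathcal{C}$, the Hom-finiteness subtleties when $(\tilde Q, W)$ is not Jacobi-finite, and the reachability bijection between seeds and rigid objects --- is where Plamondon's genuine technical work lies, and it cannot be taken for granted in the infinite or non-invertible regime motivating this paper. Second, the inductive step for the $F$-polynomials requires controlling how the quiver Grassmannians $\mathrm{Gr}_e(\mathrm{Hom}_{\mathcal{C}}(T, T'_i[1]))$ and their Euler characteristics behave under a single mutation; this is the exchange/multiplication formula for the cluster character, whose proof uses the $2$-Calabi--Yau dimension formula and a delicate analysis of the exchange triangle, and it is the real analytic heart of the statement. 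Checking that $F'_i$ so defined has constant term $1$ --- needed for the tropical-coefficient interpretation already used above --- reduces to the $e = 0$ term contributing $\chi(\mathrm{pt}) = 1$, the one normalization that must be verified by hand.
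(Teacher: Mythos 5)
The paper does not prove this statement: Theorem \ref{thm:Pla} is quoted verbatim from \cite{Plamondon10a,Plamondon10b} as background for the discussion of ``integrability by categorification'' in \S\ref{subsec:categorification}, and nothing in the paper re-derives it, so your proposal is not competing with an in-paper argument but reconstructing the proof architecture of the cited references. As such a reconstruction it is faithful and correctly organized: the Amiot--Plamondon construction of $\mathcal{C}_{(\tilde{Q},W)}$ from the complete Ginzburg dg algebra, the reachability dictionary between seeds and rigid objects under Iyama--Yoshino mutation, additivity of the index on triangles for the $g$-vector identity \eqref{eq:gT}, the $C=(G^T)^{-1}$ duality of \cite{N10} for the $c$-vector identity, and the Caldero--Chapoton/Palu/Fu--Keller cluster character for the $F$-polynomial formula \eqref{eq:FGr}, with the base case at $t_0$ handled exactly as you say (rigidity killing $\mathrm{Hom}_{\mathcal{C}}(T,T_i[1])$, hence $F_i=1$). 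You also correctly locate where the real content lies: the Hom-infinite morphism spaces outside the Jacobi-finite case and the multiplication/exchange formula for the character are precisely what Plamondon's two papers supply. Two minor corrections: the result that mutation of cluster-tilting objects induces quiver mutation of endomorphism algebras is due to Buan--Iyama--Reiten--Scott (not Smith) together with Keller--Yang; and your outline, by design, defers every substantive step to the literature, so it should be read as a guide to the cited proof rather than a self-contained argument --- which is the same status the theorem has in this paper.
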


As a direct application of Theorems \ref{thm:CFG} and \ref{thm:Pla},
one obtains the following `procedure'
to solve the initial value problem for a general
T and Y-systems occurred in $\mathcal{A}(B,x,y)$.

Let $(B',x',y')$ be the seed at  $t'\in \mathbb{T}_I$.
Suppose that we would like to know the expression $x'$ and $y'$
 in terms of initial variables $x$ and $y$.
This is solved in two steps.

{\bf Step 1.} Calculate the matrix $C'$ by solving the
piece-wise linear recursion relation
\eqref{eq:cmat}. 
Or, equivalently, solve the {\em tropical Y-system},
which is obtained from the Y-system by
replacing $+$ with the tropical $\oplus$ in \eqref{x-F}.
Then, thanks to \eqref{eq:gT}
and the result of \cite{Plamondon10a,
Plamondon10b}, the matrix $G'=(C'^{-1})^T$ uniquely determines
the rigid object $T'$ corresponding to the seed $(B',x',y')$.

{\bf Step 2.} Calculate the polynomials $F'_i(y)$ ($i\in I$)
by \eqref{eq:FGr}. Then, applying Theorem \ref{thm:CFG},
 the problem is solved.

One may regard that this procedure is formal,
 in the sense that  the {\em explicit calculation\/}
of the right hand side of \eqref{eq:FGr} is quite a formidable task,
 in general.
On the other hand, one may also regard that this is already the best possible
 answer one can expect for such a general setting,
 in the sense that the formulas in Theorems
 \ref{thm:CFG} and \ref{thm:Pla}  clearly tell us {\em the intrinsic
meaning of the resulted expressions for  $x'$ and $y'$}.

%Recently other notion of integrability by based on the Poisson/symplectic
We leave the comparison of this notion 
with other (more conventional and/or strong) ones
of integrability as a very interesting future problem.

%%%%%%%%%%%%%%%%%%%%%%%%%%%%%%%%%%%%%%%%%
\section{Poisson structure for cluster algebra without coefficients}
%%%%%%%%%%%%%%%%%%%%%%%%%%%%%%%%%%%%%%%%%%%%%%%%%%
\subsection{Setting}
%%%%%%%%%%%%%%%%%%%%

In this section we set $\PP = \{ 1 \}$ (the trivial semifield),
where $1 \cdot 1 = 1 \oplus 1 = 1$.  
Then the exchange relation \eqref{y-mutation} becomes trivial,
and \eqref{x-mutation} reduces to
\begin{align}\label{x-red-mutation}
  x'_i 
  =
  \begin{cases}
    x_i & i \neq k, \\
    \displaystyle{
    \frac{\prod_{j:b_{jk} > 0} x_j^{b_{jk}} 
          + \prod_{j:b_{jk} < 0} x_j^{-b_{jk}}}{x_k}} 
    & i = k. 
  \end{cases}
\end{align}

%%%%%%%%%%%%%%%%%%%%%%%%%%%%
\subsection{Mutation compatible Poisson bracket}\label{subsec:poissonI}
%%%%%%%%%%%%%%%%%%%%%%%%%%%%

Fix a skew-symmetrizable matrix $B = (b_{ij})_{i,j \in I}$
and a diagonal matrix $D = \mathrm{diag}(d_i)_{i \in I}$ 
as $DB$ is skew-symmetric.
We say that the matrix $B$ is {\it indecomposable} 
if there is no $I_1, I_2 \neq \emptyset$
such that $I=I_1 \sqcup I_2$ and $b_{ij}=0$ for $i \in I_1$, $j \in I_2$.
In the rest of this section,
we assume that $B$ is indecomposable without losing generalities.

The {\em log-canonical} or {\it quadratic} Poisson bracket
for $\{x_i\}_{i \in I}$ is defined by
\begin{align}\label{x-poisson}
\{x_i , x_j \} = p_{ij} x_i x_j, \qquad p_{ij} \in \Q, 
\qquad p_{ji} = -p_{ij},
\end{align}
from which the skew-symmetry $\{x_i , x_j \} = -\{x_j , x_i \}$ and 
the Jacobi identity:
$$
\{\{x_i , x_j \}, x_k\} + \{\{x_j , x_k \}, x_i\}
+ \{\{x_k , x_i \}, x_j\} = 0
$$ 
follow. 
We study the log-canonical Poisson bracket
for $\{x_i\}_{i \in I}$
which is compatible with the exchange relation \eqref{x-red-mutation}
in the following sense:

\begin{Definition}\label{def:mut-com}\cite{GSV02}
We say a Poisson bracket for $x=\{x_i\}_{i \in I}$
 is {\it mutation compatible}
if, for any $k \in I$, the bracket induced for $x' = \mu_k(x)$
 again has  the log-canonical form
$\{x'_i, x'_j \} = p'_{ij} x'_i x'_j$ with some $p'_{ij} \in \Q$.
\end{Definition}

\begin{Theorem}
\label{prop:x-Poisson:tn}
\par
(i) For a skew-symmetric matrix $P=(p_{ij})_{i,j\in I}$, the corresponding
Poisson bracket \eqref{x-poisson} is mutation compatible
if and only if $PB$ is a diagonal matrix.
\par
(ii) Suppose that $PB$ is a diagonal matrix.
Let $P'=(p'_{ij})_{i,j\in I}$ be the matrix
for the induced bracket $\{x'_i,x'_j\}=p'_{ij} x'_i x'_j$.
Then, $P'$ is given by
\begin{align}
\label{P-mutation}
  p'_{ij} 
  = 
  \begin{cases}
    - p_{ik} + \sum_{l:b_{lk}>0} b_{lk} p_{il} & i \neq j = k,
    \\
    - p_{kj} + \sum_{l:b_{lk}>0} b_{lk} p_{lj} & k = i \neq j,
    \\
    p_{ij} & \text{otherwise}.
  \end{cases}
\end{align} 
\par
(iii) The matrix $P'B'$ is again a diagonal matrix if and only if
$PB=cD$ where $c \in \Q$ is a constant.
In this case, $PB$ is invariant under the mutation,
i.e., $PB = P'B' = cD$.  
\end{Theorem}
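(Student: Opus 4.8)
The plan is to prove the three parts in sequence, with the bulk of the computational work concentrated in a single mutation calculation that then serves all three.

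\medskip

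\noindent\textbf{Setup for the core computation.} First I would fix an index $k\in I$ and compute, directly from the reduced exchange relation \eqref{x-red-mutation}, the induced bracket $\{x'_i,x'_j\}$ for the mutated cluster $x'=\mu_k(x)$. Since $x'_i=x_i$ for $i\neq k$, the only nontrivial brackets to recompute are those involving $x'_k$. Writing $x'_k=(M_+ + M_-)/x_k$ where $M_+=\prod_{j:b_{jk}>0}x_j^{b_{jk}}$ and $M_-=\prod_{j:b_{jk}<0}x_j^{-b_{jk}}$, I would use the Leibniz rule together with the log-canonical relations \eqref{x-poisson} to compute $\{x'_i,x'_k\}$ for $i\neq k$. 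The key intermediate quantities are $\{x_i,M_+\}=\big(\sum_{l:b_{lk}>0}b_{lk}p_{il}\big)x_i M_+$ and similarly for $M_-$. For the bracket to retain log-canonical form one needs $\{x_i,M_+\}/(x_iM_+)=\{x_i,M_-\}/(x_iM_-)$, i.e. the coefficient must be independent of the sign class; the condition that makes this work out is precisely that the relevant combination equals the corresponding entry of $PB$, and demanding this coefficient be the same for all $i$ is what forces the diagonal structure. This single computation simultaneously establishes the ``if'' direction of (i) and yields the formula \eqref{P-mutation} of (ii).

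\medskip

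\noindent\textbf{Parts (i) and (ii).} For the ``only if'' direction of (i), I would argue that if the induced bracket is log-canonical for \emph{every} $k$, then the sign-independence condition above must hold for each $k$ and each $i\neq k$, and I would show this is equivalent to the off-diagonal entries of $PB$ vanishing. Concretely, the coefficient obstruction is controlled by $(PB)_{ik}$ for $i\neq k$, so vanishing of all off-diagonal entries is exactly the statement that $PB$ is diagonal. Part (ii) then records the explicit formula \eqref{P-mutation}: the entries $p'_{ij}$ with $i,j\neq k$ are unchanged because those cluster variables are untouched, and the entries $p'_{ik}$ (and by skew-symmetry $p'_{kj}$) come directly from the bracket computation above. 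I expect the verification of \eqref{P-mutation} to be routine bookkeeping once the core computation is in hand; the one subtlety is tracking signs consistently between the $i\neq j=k$ and $k=i\neq j$ cases via $p'_{ji}=-p'_{ij}$.

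\medskip

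\noindent\textbf{Part (iii) and the main obstacle.} For (iii) I would assume $PB$ is diagonal (so the bracket on $x$ is mutation compatible by (i)) and ask when $P'B'$ is also diagonal. The natural strategy is to compute $P'B'$ using \eqref{P-mutation} for $P'$ and \eqref{B-mutation} for $B'$, and compare with the diagonality requirement. I anticipate the cleanest route is to establish the invariance claim $P'B'=PB$ \emph{under the hypothesis} $PB=cD$ first: if $PB=cD$ then $P'B'=cDB'$ (using that $DB'$ relates to $DB$ through the mutation), and one checks $P'B'=cD$ by a matrix identity. The harder direction is the converse: showing that diagonality of $P'B'$ for a mutation-compatible $P$ forces $PB=cD$ with a \emph{single scalar} $c$. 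The indecomposability of $B$, assumed at the start of the subsection, is exactly what should be needed here: diagonality of both $PB$ and $P'B'$ should give a relation forcing the diagonal entries of $PB$ to be proportional to the $d_i$ with a common constant $c$, and indecomposability propagates this proportionality across all of $I$ (a disconnected $B$ would allow different constants on different blocks). \textbf{The main obstacle I expect is this converse in (iii)}: carefully extracting from the off-diagonal vanishing of $P'B'$ the linear relations among the diagonal entries of $PB$, and then invoking indecomposability to conclude a single global constant rather than a locally constant function on the components of the associated graph.
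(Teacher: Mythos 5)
Your proposal is correct and matches the paper's own proof in all essentials: the Leibniz-rule computation of $\{x_i,x'_k\}$ whose sign-independence condition is exactly $(PB)_{ik}=0$ (giving (i) and (ii) simultaneously), and, for (iii), the direct computation of $P'B'$ from \eqref{P-mutation} and \eqref{B-mutation}, with indecomposability invoked precisely where you place it, namely to upgrade the proportionality $\sigma_j/d_j=\sigma_k/d_k$ forced whenever $b_{jk}\neq 0$ into a single global constant $c$ with $\sigma_i = c d_i$. The paper merely organizes (iii) slightly differently---it computes $(P'B')_{ij}$ once for a general diagonal $PB=\mathrm{diag}(\sigma_i)_{i\in I}$ and reads off both directions of the equivalence from that single formula, rather than proving the invariance $P'B'=PB=cD$ first and the converse second---but this is the same calculation.
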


\begin{proof}
(i) For $k \in I$, set $x' = \mu_k(x)$.
The Poisson bracket compatible with the mutation $\mu_k$ satisfies
$$
  \{ x'_i, x'_j \} 
  =
  \begin{cases}
    \{ x_i, x'_k \} = p'_{ik} x_i x'_k & i \neq k, j = k, \\
    \{ x'_k, x_j \} = - p'_{jk} x'_k x_j & i = k, j \neq k, \\
    \{ x_i, x_j \} & \text{otherwise (including $i=j=k$)}.
  \end{cases}
$$  
We have a nontrivial condition  when $i \neq k$ and  $j = k$:
\begin{align}\label{Pxx1}
\begin{split}
  \{ x_i, x'_k \} 
  &= 
  \left\{ x_i,~ \frac{\prod_{l:b_{lk} > 0} x_l^{b_{lk}} 
          + \prod_{l:b_{lk} < 0} x_l^{-b_{lk}}}{x_k} \right\}
  \\
  &= - p_{ik} x_i x'_k 
     + \frac{x_i}{x_k} 
       \left( \sum_{l:b_{lk}>0} b_{lk} p_{il}\prod_{l:b_{lk} > 0} x_l^{b_{lk}}
        - \sum_{l:b_{lk}<0} b_{lk} p_{il}\prod_{l:b_{lk} < 0} x_l^{-b_{lk}}
       \right)
  \\
  &=  
  p'_{ik} x_i x'_k.
\end{split} 
\end{align}
Thus we have
\begin{align}\label{Pxx2}
  \sum_{l:b_{lk}>0} b_{lk} p_{il} = - \sum_{l:b_{lk}<0} b_{lk} p_{il},
\end{align}
from which $(P B)_{ik} = 0$ follows for $i \neq k$.
%We set $PB = \Delta$, where $\Delta$ is a diagonal matrix.
\par
(ii) When $i \neq k$ and $j = k$, from \eqref{Pxx1} and \eqref{Pxx2} we obtain
$$ 
  p'_{ik} = - p_{ik} + \sum_{l:b_{lk}>0} b_{lk} p_{il}.
$$
When $i= k$ and $j \neq k$, we have $p'_{kj} = - p'_{jk}$ since
the matrix $P'$ is skew-symmetric.
For other cases of $i,j$, we have $p'_{i,j} = p_{i,j}$.
Thus \eqref{P-mutation} follows.
\par
(iii)
Assume $PB = \mathrm{diag}(\sigma_i)_{i \in I}$.
By direct calculations using \eqref{B-mutation} and \eqref{P-mutation}
we obtain 
$$
(P'B')_{i,j} = 
\begin{cases}
  \sigma_i \delta_{i,j} & i,j \neq k
  \\
  0 & i \neq k = j
  \\
  \sigma_j [b_{jk}]_+ + \sigma_k [-b_{kj}]_+
  &i = k \neq j
  \\
  \sigma_k & i=k=j.
\end{cases}
$$
Thus $P'B'$ becomes a diagonal matrix if and only if
$\sigma_j b_{jk} = - \sigma_k b_{kj}$,
namely $\sigma_k = c d_k$ for all $k \in I$ with a constant $c \in \Q$.
Then the claim follows.
\end{proof}

\begin{Remark}
In Theorem \ref{prop:x-Poisson:tn},
we need the assumption that $B$ is indecomposable 
only at (iii).  
\end{Remark}

%%%%%%%%%%%%%%%%%%%%%%%%%%%%
\subsection{Poisson structure}\label{subsec:xxx}
%%%%%%%%%%%%%%%%%%%%%%%%%%%%

Consider the cluster pattern $t'\mapsto (B',x')$ ($t'\in \mathbb{T}_I$)
 for the cluster algebra $\mathcal{A}(B,x)$
in \S \ref{subsec:def}.
In view of Theorem \ref{prop:x-Poisson:tn}, we endow each seed (without coefficients)
$(B',x')$ at $t'\in \mathbb{T}_I$  with a skew-symmetric rational matrix $P'=(p'_{ij})_{i,j \in I}$
satisfying the following properties:
\par
(i) $P'B' = cD$, where $c \in \Q$ is a constant independent of the seeds.
\par
(ii) For any edge $t'\, \frac{k}{\phantom{aaa}}\,  t''$,
the corresponding matrices $P'$ and $P''$ satisfy the 
exchange relation
\begin{align}
\label{P-mutation:tn}
  p''_{ij} 
  = 
  \begin{cases}
    - p'_{ik} + \sum_{l:b'_{lk}>0} b'_{lk} p'_{il} & i \neq j = k,
    \\
    - p'_{kj} + \sum_{l:b'_{lk}>0} b'_{lk} p'_{lj} & k = i \neq j,
    \\
    p'_{ij} & \text{otherwise}.
  \end{cases}
\end{align} 
We  call the assignment $t'\mapsto (B',x';P')$
a {\em Poisson structure} for $\mathcal{A}(B,x)$,
and also call each matrix $P'$ the {\em Poisson matrix} 
at $t'\in \mathbb{T}_I$.

\begin{Remark}
It follows from Theorem 3.2 that for $(B',x'; P')$ and $(B'',x'';P'') 
$ at
$t'$ and $t''$, if $(B',x')= (B'',x'')$, then $P'=P''$. Therefore,
one may also think that the Poisson matrix $P'$ is attached to the  
seed $(B',x')$.
\end{Remark}

Thus, to construct a Poisson structure,
take any skew-symmetric rational matrix $P$ as $PB = cD$ with $c \in \Q$,
and set it as the Poisson matrix at the initial vertex $t_0$.
Then, the Poisson matrices at the other vertices in $\mathbb{T}_I$  are
uniquely determined from $P$ by the exchange relation \eqref{P-mutation:tn}.

One can describe the Poisson matrices more explicitly.
We continue to assume that $B$ is indecomposable.
We first consider the case when the index set $I$ is finite.

\begin{Theorem}[{{\it cf.} \cite[Theorem 1.4]{GSV02}}] 
\label{th:x-Poisson}
Suppose that the index set $I$ of $B$  is finite.
Let $t'\mapsto (B',x';P')$ ($t'\in \mathbb{T}_I$) be any
Poisson structure for $\mathcal{A}(B,x)$,
and let $P$ be the Poisson matrix at $t_0$.

(i) If $B$ is invertible, then $P$  is given by $P=cDB^{-1}$,
 where $c$ is any rational number. Furthermore,
 $P'=cDB'^{-1}$ holds, where $c$ is the same as above.

(ii) If $B$ is not invertible, then $P$
is given by any skew-symmetric matrix which satisfies $PB=O$.
Furthermore, $P'$ also satisfies $P'B'=O$.
\end{Theorem}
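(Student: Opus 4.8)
The plan is to leverage Theorem \ref{prop:x-Poisson:tn}, which already gives us that the Poisson-structure condition $P'B'=cD$ holds at every vertex with the same constant $c$, and reduce the statement to a purely linear-algebraic description of the solution set of the matrix equation $PB=cD$ subject to $P$ being skew-symmetric. So the real content here is: given that $I$ is finite and we know $PB=cD$, solve for $P$ and then explain how the explicit formula propagates along the tree.

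For part (i), suppose $B$ is invertible. Then the equation $PB=cD$ has the unique solution $P=cDB^{-1}$, and the only thing to verify is that this $P$ is genuinely skew-symmetric, which I would check using that $DB$ is skew-symmetric: writing $P=cDB^{-1}=c(B^{-1})^{T}(B^{T}D)B^{-1}$ and using $B^{T}D=-DB$ (equivalently $(DB)^{T}=-DB$) shows $P^{T}=-P$ after transposing. Since $c$ is unconstrained, every rational $c$ gives an admissible $P$, establishing the first sentence. For the ``furthermore'' claim I would argue that $B'$ is again invertible — mutation preserves invertibility because the mutation \eqref{B-mutation} can be written as $B'=E B F$ for suitable invertible integer matrices $E,F$ depending on $k$ (the standard factorization of matrix mutation), so $\det B'=\pm\det B\neq 0$ — and then Theorem \ref{prop:x-Poisson:tn}(iii) guarantees $P'B'=cD$ with the \emph{same} constant $c$; invertibility of $B'$ then forces $P'=cDB'^{-1}$ by the same uniqueness argument.

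For part (ii), $B$ is not invertible, so the solution space of $PB=cD$ is larger. The key observation is that any admissible Poisson matrix must satisfy $PB=cD$ for some fixed $c$, and I claim $c$ must be $0$. The reason is that if $B$ is singular there is a nonzero vector $v$ with $Bv=0$, and then $cDv=PBv=0$; since $D$ is a diagonal matrix with strictly positive entries, $Dv\neq 0$, forcing $c=0$. Hence $PB=O$, and conversely any skew-symmetric $P$ with $PB=O$ is an admissible initial Poisson matrix by Theorem \ref{prop:x-Poisson:tn}. The propagation statement $P'B'=O$ then follows immediately from Theorem \ref{prop:x-Poisson:tn}(iii) with $c=0$.

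The main obstacle I anticipate is the ``furthermore'' part of (i): one must know that mutation preserves invertibility of $B$ and — more importantly — that the propagated constant $c$ is the \emph{same} at $t'$ as at $t_0$. The constancy of $c$ is exactly what the definition of Poisson structure builds in (condition (i): $c$ is independent of the seed), so that part is free once we invoke the definition; the genuine work is the factorization argument $B'=EBF$ showing $\det B'=\pm\det B$, which I would state but not belabor since it is standard. By contrast part (ii) is essentially immediate once the singularity of $B$ is used to pin down $c=0$, and this $c=0$ case is automatically self-propagating because $O B'=O=cD$ trivially. I would present (ii) first or in parallel, as its argument is the cleaner of the two.
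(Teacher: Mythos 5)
Your proof is correct and follows essentially the same route as the paper: the paper likewise reduces everything to the definition of Poisson structure together with Theorem \ref{prop:x-Poisson:tn}, plus the observation that $DB^{-1}$ is skew-symmetric (equivalent to your congruence computation with $B^{T}D=-DB$). The only differences are that you make explicit two points the paper leaves implicit --- that singularity of $B$ forces $c=0$ in part (ii), and that mutation preserves invertibility of $B$ (needed for $P'=cDB'^{-1}$ to be well defined, and obtainable either from your $B'=EBF$ factorization or, when $c\neq 0$, directly from $P'B'=cD$ being invertible) --- so this is a welcome elaboration rather than a different approach.
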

\begin{proof}
Note that $DB^{-1}$ is skew-symmetric because
$BD^{-1}$ is skew symmetric.
Then, the claim  is an immediate consequence of the definition
of Poisson structure and Theorem 3.2.
\end{proof}

\begin{Remark}
The result in Theorem \ref{th:x-Poisson} is quite close to 
\cite[Theorem 1.4]{GSV02}, but the assumption of the two theorems 
are slightly different.
When $B$ is skew-symmetric and invertible,
$P$ gives the Poisson bracket studied in \cite{GSV02,GSV03}.
\end{Remark}

\begin{Example}
The Somos $4$ equation:
\begin{align*}
  s_{n+4} s_n = s_{n+3} s_{n+1} + (s_{n+2})^2
\end{align*}
is a simple example described by the cluster algebra
with a non-invertible matrix $B$ \cite{FordyMarsh09,Hone07}:
\begin{align*}
B =
\begin{pmatrix}
 0 & -1 & 2 & -1 \\
 1 & 0 & -3 & 2 \\
 -2 & 3 & 0 & -1 \\
 1 & -2 & 1 & 0  
\end{pmatrix}.
\end{align*}
The general skew-symmetric solution $P$ to $PB=O$
is unique up to a constant number as 
$$
P = 
\begin{pmatrix}
0 & 1 & 2 & 3 \\
-1 & 0 & 1 & 2\\
-2 & -1 & 0 & 1 \\
-3 & -2 & -1 & 0  
\end{pmatrix},
$$
which appeared in \cite[eq.(2.9)]{Hone07}.
We note that this $P$ gives a unique mutation periodic Poisson bracket for 
$x$ at the same time.
\end{Example}

When $I$ is infinite, the situation is a little more complicated
because, in general, the inverse and the associativity
of matrices are more subtle.
For a pair of matrices $M$ and $N$ with an infinite index set $I$,
we say $M$ is a {\em left inverse\/} of $N$ if $MN=\mathbb{I}$.
Note that a left inverse of $N$ is not necessarily unique when
it exists.

\begin{Theorem}
\label{th:x-Poisson-infinite}
Suppose that the index set $I$ of $B$  is infinite,
and let $B'$, $P$, $P'$ be the same as in Theorem \ref{th:x-Poisson}.
\par
(i) If $B$ has a left inverse $M$ such that $DM$ is skew-symmetric,
then $P$ is given by $P=cDM+R$, where $c$ is any
rational number and $R$ is any skew-symmetric matrix
which satisfies $RB=O$.
Furthermore, $P'=cDM'+R'$, where $c$ is the same as above,
$M'$  is a left inverse of $B'$ such that $DM'$ is skew-symmetric,
 and  $R'$ is a skew-symmetric matrix
which satisfies $R'B'=O$.

\par
(ii) If $B$ does not have any left inverse $M$
 such that $DM$ is skew-symmetric,
then $P$ is given by any skew-symmetric matrix
which satisfies $PB=O$.
Furthermore, $P'$ also satisfies $P'B'=O$.
\end{Theorem}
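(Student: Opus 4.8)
The plan is to mirror the proof of Theorem \ref{th:x-Poisson}, adapting it to handle the subtleties of inverses and associativity in the infinite-index setting. The whole claim reduces, via the definition of Poisson structure and Theorem \ref{prop:x-Poisson:tn}, to classifying the skew-symmetric matrices $P$ satisfying $PB = cD$ for some $c \in \Q$, and then verifying that this condition propagates correctly under the mutation \eqref{P-mutation:tn}. First I would settle part (i). Given a left inverse $M$ of $B$ with $DM$ skew-symmetric, the matrix $P_0 := cDM$ is skew-symmetric and satisfies $P_0 B = cDMB = cD\mathbb{I} = cD$, so $P_0$ is an admissible Poisson matrix. If $P$ is any other admissible matrix with $PB = cD$, then $R := P - P_0$ is skew-symmetric and satisfies $RB = cD - cD = O$; conversely, any such $R$ produces an admissible $P = P_0 + R$. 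This establishes the stated form $P = cDM + R$ in part (i). For part (ii), the absence of any left inverse $M$ with $DM$ skew-symmetric forces $c = 0$: if $PB = cD$ with $c \neq 0$, then $c^{-1}P$ would be a left inverse of $B$ and $D(c^{-1}P) = c^{-1}DP = -c^{-1}(DP)^T$ would be skew-symmetric (using that $DP$ is skew-symmetric, which follows from $P$ skew-symmetric and the skew-symmetry of $DB^{-1}$-type reasoning carried out via $P = c^{-1}\cdot$ something), contradicting the hypothesis. Hence $PB = O$, as claimed.

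The nontrivial content, and the part where I would spend the most care, is the second sentence of each item: the propagation statement. By the definition of Poisson structure, any $P'$ at $t'$ already satisfies $P'B' = cD$ with the \emph{same} constant $c$, by property (i) of the Poisson structure together with Theorem \ref{prop:x-Poisson:tn}(iii). So in case (i) I must exhibit, for the mutated matrix $B'$, a left inverse $M'$ with $DM'$ skew-symmetric, and then the same decomposition argument as above gives $P' = cDM' + R'$. The natural candidate is to mutate $M$ alongside $B$: since mutation preserves skew-symmetrizability and the relation $MB = \mathbb{I}$ is an algebraic identity, I expect $M' := (B')^{\text{left inv}}$ to be constructed from $M$ by an explicit transformation compatible with \eqref{B-mutation}. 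In case (ii), I would argue that $B'$ inherits the non-existence of such a left inverse from $B$, because a left inverse of $B'$ with the skew-symmetry property could be pulled back through the (involutive) mutation to produce one for $B$, contradicting the hypothesis.

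The hard part will be controlling the infinite matrix products so that associativity and the left-inverse property genuinely survive mutation. Unlike the finite case, where $B^{-1}$ is a two-sided inverse and everything is automatic, here $MB = \mathbb{I}$ does not give $BM = \mathbb{I}$, so I cannot freely cancel or reassociate; I must verify directly that the relevant products (e.g. $M'B' = \mathbb{I}$ and the skew-symmetry of $DM'$) are well defined, using the standing assumption that $B$ has only finitely many nonzero entries in each row and column. Concretely, I would check that the mutation of $M$ that is forced by \eqref{B-mutation} again has locally finite support and that the product $M'B'$ converges entrywise to $\mathbb{I}$. I expect this to be the main obstacle, and I would handle it by deriving the transformation rule for $M'$ from the requirement $P' = cDM' + R'$ together with the already-established mutation rule \eqref{P-mutation:tn} for $P'$, rather than by guessing the formula for $M'$ and verifying it from scratch.
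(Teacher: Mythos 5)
Your classification argument in part (i) is correct and matches the paper's. But your part (ii) contains a concrete error: $c^{-1}P$ is \emph{not} a left inverse of $B$, since $c^{-1}PB = D \neq \mathbb{I}$ unless $D=\mathbb{I}$. The correct candidate is $M := c^{-1}D^{-1}P$: then $MB = c^{-1}D^{-1}(PB) = \mathbb{I}$ (the reassociation is harmless because $D^{-1}$ is diagonal), and $DM = c^{-1}P$ is skew-symmetric for free, because $P$ is. Your attempted justification via ``$DP$ is skew-symmetric'' is false in general: $(DP)^T = P^T D = -PD$, which equals $-DP$ only if $P$ and $D$ commute. This is exactly the paper's argument, run as an equivalence: for skew-symmetric $P$ and $c\neq 0$, $PB = cD$ holds if and only if $P = cDM$ with $M$ a left inverse of $B$ such that $DM$ is skew-symmetric; part (ii) then follows because a nonzero $c$ would manufacture such an $M$, contradicting the hypothesis.

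The larger misdirection is in the ``furthermore'' statements, where you anticipate the main difficulty. You correctly observe that $P'B' = cD$ with the \emph{same} $c$ holds at every seed by the definition of the Poisson structure together with Theorem \ref{prop:x-Poisson:tn}(iii) — but then the work you plan (mutating $M$ alongside $B$, checking locally finite support and entrywise convergence of $M'B'$, pulling a left inverse of $B'$ back through the involutive mutation) is unnecessary; the obstacle you expect does not exist. In case (i) with $c \neq 0$, the left inverse $M'$ is manufactured directly from $P'$ itself by the same equivalence: $M' := c^{-1}D^{-1}P'$ satisfies $M'B' = \mathbb{I}$ and $DM' = c^{-1}P'$ is skew-symmetric, so $P' = cDM'$ (take $R'=O$, or decompose against any other admissible left inverse, since any two left inverses of $B'$ differ by a matrix killed by $B'$ on the right); when $c=0$ the formula degenerates to $P' = R'$. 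In case (ii) you do not need to show that $B'$ inherits the non-existence of a suitable left inverse: since $c$ is seed-independent and the initial-seed argument forces $c = 0$, the relation $P'B' = cD = O$ is immediate. The only genuinely infinite-index issues are the ones the paper records in one line — products with the diagonal matrix $D^{-1}$ reassociate freely, and all products with $B$ or $B'$ are entrywise finite sums by the standing assumption that $B$ has finitely many nonzero entries in each row and column.
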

\begin{proof}
(i) By definition, $P$ is any skew-symmetric matrix which satisfies
$PB=cD$ for some $c$.
If $c\neq 0$, we have
\begin{align*}
PB=cD \ &\Longleftrightarrow \ (D^{-1}P)B = D^{-1}(PB) = c \mathbb{I}\\
&\Longleftrightarrow \
c^{-1}D^{-1}P = M\ (\mbox{$M$: a left inverse of $B$})\\
&\Longleftrightarrow \ P = cDM\ (\mbox{$M$: a left inverse of $B$}).
\end{align*}
All the associativities used here   are easily justified;
for example,
$(D^{-1}P)B = D^{-1}(PB)$ holds because $D^{-1}$ is a diagonal matrix.
Also, if both $M$ and $M'$ are left inverses of $B$,
then $(cDM- cDM')B=O$. Thus, $cDM'=cDM+R$ for some $R$ with $RB=0$.
Thus, we obtain the claim.
\par
(ii) Let $PB=cD$. Then, the assumption
and the argument in (a) show that $c=0$.
Thus, the claim follows.
\end{proof}

\begin{Example}\label{ex:infinite}
The left inverse of
the skew-symmetric matrix $B$ for the infinite quiver $Q$ depicted 
at Figure 3 is not unique.
Here the index set $I$ of $B$ is $\Z$, and 
$B = (b_{ij})_{i,j \in I}$ is given by 
$b_{ij} = (-1)^i (\delta_{i,j+1} + \delta_{i,j-1})$.
The Poisson matrix $P = cM + R$ which satisfies $PB=c\mathbb{I}$ is
given by 
\begin{align*}
&M=(m_{ij})_{i,j \in I}, \\ 
&\qquad
m_{2k,j} = \begin{cases} 0 & j \geq 2k \\
                         \sin \frac{(j-2k)\pi}{2} & j < 2k
           \end{cases}, \quad
m_{2k+1,j} = \begin{cases} \sin \frac{(j-2k-1)\pi}{2} & j > 2k+1 \\
                           0 & j \leq 2k+1 \\
           \end{cases},
\\
&R= a (r_{ij})_{i,j \in I}, \qquad 
r_{ij} = \sin \frac{(j-i)\pi}{2}; ~ a \in \Q,
\end{align*}
where $MB = \mathbb{I}$ and $RB=O$ hold.

\begin{figure}
\label{fig:Q-infinite}
\begin{math}
\begin{xy}
%%%%%%%%%%%%%%%%%%%%%%%%%%%%%%%
(-12,0)*{\cdots},
(0,0)*{\bullet},+/d10pt/*{ -2},
(10,0)*{\bullet},+/d10pt/*{-1},
(20,0)*{\bullet},+/d10pt/*{0},
(30,0)*{\bullet},+/d10pt/*{1},
(40,0)*{\bullet},+/d10pt/*{2},
(50,0)*{\bullet},+/d10pt/*{3},
(62,0)*{\cdots},
\ar (-2,0);(-8,0)
\ar (2,0);(8,0)
\ar (18,0);(12,0)
\ar (22,0);(28,0)
\ar (38,0);(32,0)
\ar (42,0);(48,0)
\ar (58,0);(52,0)
\end{xy}
\end{math}
\caption{Infinite quiver Q
(Example \ref{ex:infinite})}
\end{figure}
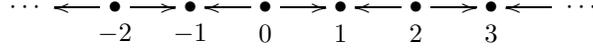

\end{Example}

%%%%%%%%%%%%%%%%%%%%%%%%%%%%%%%%%%%%
\subsection{Induced Poisson bracket}
%%%%%%%%%%%%%%%%%%%%%%%%%%%%%%%%%%%%

Following \cite{GSV02} we introduce variables $f_i$:
\begin{align}\label{f-x}
  f_i = \prod_{j \in I} x_j^{b_{ji}}, \qquad i \in I. 
\end{align}

\begin{Prop}[{{\it cf.} \cite[Theorem 1.4]{GSV02}}] 
\label{lemma:f-poisson}
The Poisson structure for $\mathcal{A}(B,x)$ induces the Poisson bracket
for $f_i$:
$$
  \{f_i , f_j \} = p_{ij}^f f_i f_j, 
  \quad P^f = (p^f_{ij})_{i,j \in I} = - cDB.
$$
Further, we have 
\begin{align}\label{fx-poisson}
\{f_i, x_j\} = -c \delta_{ij} d_i f_i x_j.
\end{align}
\end{Prop}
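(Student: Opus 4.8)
The plan is to verify both Poisson brackets by direct substitution, using the defining relation $\{x_i,x_j\}=p_{ij}x_ix_j$ from \eqref{x-poisson} together with the hypothesis $PB=cD$ that holds for any Poisson structure. The key algebraic fact I will use repeatedly is the Leibniz rule for a log-canonical bracket on monomials: if $g=\prod_j x_j^{a_j}$ and $h=\prod_j x_j^{c_j}$, then $\{g,h\}=\left(\sum_{k,l}a_k c_l\, p_{kl}\right)gh$. Since the $f_i$ in \eqref{f-x} are monomials $f_i=\prod_j x_j^{b_{ji}}$, this immediately reduces everything to bilinear sums of the $p_{kl}$ weighted by entries of $B$.

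First I would compute $\{f_i,f_j\}$. Applying the monomial Leibniz rule with exponent vectors given by the $i$th and $j$th columns of $B$ gives
\begin{align*}
\{f_i,f_j\}=\Bigl(\sum_{k,l\in I}b_{ki}\,p_{kl}\,b_{lj}\Bigr)f_if_j
=\bigl((B^{T}PB)_{ij}\bigr)f_if_j.
\end{align*}
Now I substitute $PB=cD$. Using skew-symmetry of $P$ one has $B^{T}P=-(PB)^{T}=-(cD)^{T}=-cD$ (here $D$ is diagonal, hence symmetric), so $B^{T}PB=-cDB$. Therefore $p^f_{ij}=-c\,(DB)_{ij}$, i.e.\ $P^f=-cDB$, which is the first assertion. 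One should note that $DB$ is skew-symmetric by the definition of skew-symmetrizability, so $P^f$ is genuinely a valid (skew-symmetric) log-canonical Poisson matrix, as it must be.

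For the second identity \eqref{fx-poisson} I would apply the Leibniz rule to the pairing of the monomial $f_i$ against the single variable $x_j$:
\begin{align*}
\{f_i,x_j\}=\Bigl(\sum_{k\in I}b_{ki}\,p_{kj}\Bigr)f_ix_j
=\bigl((B^{T}P)_{ij}\bigr)f_ix_j.
\end{align*}
As computed above, $B^{T}P=-cD$, so the coefficient is $-c\,(D)_{ij}=-c\,d_i\delta_{ij}$, giving $\{f_i,x_j\}=-c\,d_i\delta_{ij}f_ix_j$, exactly \eqref{fx-poisson}. The only genuine subtlety — the main obstacle — is justifying the manipulations when $I$ is infinite: the matrix products $B^{T}PB$ and $B^{T}P$ and the reshuffling $B^{T}P=-(PB)^{T}$ all involve interchanging sums, so I must check these sums are finite. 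This is guaranteed by the standing assumption that $B$ has only finitely many nonzero entries in each row and column, which makes the exponent vectors of each $f_i$ finitely supported and renders every sum above a finite one; with that observation the associativity issues flagged for the infinite case in Theorem \ref{th:x-Poisson-infinite} do not arise here.
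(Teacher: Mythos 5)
Your proof is correct and follows essentially the same route as the paper: both compute $P^f = B^{T}PB$ via the log-canonical Leibniz rule, substitute $PB=cD$ and use skew-symmetry to obtain $-cDB$, and verify \eqref{fx-poisson} by the same direct column-sum computation $(B^{T}P)_{ij}=-(PB)_{ji}$. Your explicit remark that the standing assumption of finitely many nonzero entries per row and column of $B$ keeps all sums finite in the infinite-$I$ case is a small extra care the paper leaves implicit, but it does not change the argument.
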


\begin{proof}
From the definition of $f_i$, it is easy to see 
\begin{align}\label{P-Pf}
  P^f = B^T P B.
\end{align}
Since $P$ satisfies $PB=cD$, we obtain 
$P^f = B^T c D = - c DB$, which is skew-symmetric.
Further, we see
\begin{align*}
  \{f_i, x_j\} 
  &= \{\prod_{l:b_{li}\neq 0} x_l^{b_{li}}, x_j \}
  = \sum_{l:b_{li}\neq 0} b_{li} f_i x_j p_{lj}
  \\
  &= - (PB)_{ji} f_i x_j 
  = -c \delta_{ij} d_i f_i x_j
\end{align*}
and the claim follows.
\end{proof}

%%%%%%%%%%%%%%%%%%%%%%%%%%%%%%%%
\subsection{Compatible $2$-form}
%%%%%%%%%%%%%%%%%%%%%%%%%%%%%%%%

For a log-canonical Poisson bracket \eqref{x-poisson},
we say that the $2$-form 
$$
  \frac{1}{2} \sum_{i,j \in I} \omega_{ij}
  \frac{dx_i}{x_i} \wedge \frac{dx_j}{x_j}
$$
is compatible with the Poisson bracket if $P W = d \,\mathbb{I}$ holds
with a constant number $d \neq 0$, where $W=(\omega_{ij})_{i,j \in I}$.

When the matrices $B$ and $P$ satisfy $PB=cD$ with $c \neq 0$,
the $2$-form compatible with the Poisson bracket \eqref{x-poisson} 
is given by 
$$
  \omega = \frac{1}{2} \sum_{i,j \in I} b_{ij} d_j^{-1} 
    \frac{dx_i}{x_i} \wedge \frac{dx_j}{x_j}  
$$
up to a constant.
  
\begin{Remark}
The $2$-form invariant under the mutation 
was first introduced in \cite{GSV03}.
The above $2$-form $\omega$ is its generalization.
\end{Remark}

%%%%%%%%%%%%%%%%%%%%%%%%%%%%%%%%%%%%%%%%%
\section{Poisson structure for cluster algebra with coefficients}
%%%%%%%%%%%%%%%%%%%%%%%%%%%%%%%%%%%%%%%%%
\subsection{Setting}
%%%%%%%%%%%%%%%%%%%%

In this section we study the case $\PP$ is a universal semifield 
$\PP_{\mathrm{univ}}(y)$ generated by $y = (y_i)_{i \in I}$,
which is the set of all rational functions of $y_i ~(i \in I)$
written as subtraction-free expressions.
Here the operation $\oplus$ is the usual addition.

%%%%%%%%%%%%%%%%%%%%%%%%%%%%
\subsection{Mutation compatible Poisson bracket}
%%%%%%%%%%%%%%%%%%%%%%%%%%%%

Fix a skew-symmetrizable and indecomposable matrix $B = (b_{ij})_{i \in I}$
and a diagonal matrix $D = \mathrm{diag}(d_i)_{i \in I}$ 
as $DB$ is skew-symmetric.
In the following, for the matrix $M$ we write $M^T$ for the transpose
of $M$.
We study the mutation compatible Poisson bracket
for $\{x_i, y_i\}_{i \in I}$ in the sense of 
Definition \ref{def:mut-com}.
We set 
\begin{align}\label{xy-poisson-all}
\{ y_i, y_j \} = p^y_{ij} y_i y_j, 
\quad 
\{ x_i, y_j \} = p^{xy}_{ij} x_i y_j,
\quad  
\{ x_i, x_j \} = p_{ij}^x x_i x_j,
\end{align}
with $p_{ij}^x, p^{xy}_{ij}, p^y_{ij} \in \Q$,
and define a matrix $\mathcal{P}$:
\begin{align}\label{big-P}
\mathcal{P} = \begin{pmatrix}
              P^x & P^{xy} \\ -P^{xy \, T} & P^y
              \end{pmatrix},
\end{align}
where 
$P^x=(p^x_{ij})_{i,j \in I},
~ P^{xy}=(p^{xy}_{ij})_{i,j \in I},
~P^y=(p^y_{ij})_{i,j \in I}$.

The Poisson bracket for $y$ compatible with the mutation 
\eqref{y-mutation}
is uniquely determined up to some constant $c_y \in \Q$ as \cite[\S 2.1]{FockGon07}
\begin{align}\label{yy-poisson}
  P^y = c_y DB.
\end{align}
(The matrix $B$ in \cite{FockGon07} corresponds to $B^T$ here.) 
In view of \eqref{fx-poisson}, we assume 
\begin{align}\label{xy-poisson}
P^{xy} =\mathrm{diag}(p_i)_{i \in I},
\end{align}
and construct $P^x$.

\begin{Prop}\label{prop:xy-poisson}
The mutation compatible Poisson brackets are given by 
\begin{align}\label{xy-xx-poisson}
  P^{xy} = c_y D, \qquad P^x = P,
\end{align}
where $P$ is what obtained in Theorem \ref{th:x-Poisson}
or Theorem \ref{th:x-Poisson-infinite}.
More precisely, when $B$ has the inverse (resp. a left inverse $M$ 
such that $DM$ is skew-symmetric), we have $P=c_x D B^{-1}$
(resp. $P=c_x DM$), where $c_x \in \Q$ is some constant. 
Otherwise, $P$ is any solution to $PB=O$.
\end{Prop}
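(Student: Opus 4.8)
The plan is to impose the mutation-compatibility condition of Definition \ref{def:mut-com} on the full bracket \eqref{xy-poisson-all}, block by block, and extract the constraints on $P^{xy}$ and $P^x$. The $P^y$ block is already fixed by \eqref{yy-poisson}, so I would treat it as given and concentrate on the cross-bracket $\{x_i, y_j\}$ and the $x$--$x$ bracket. Under $\PP = \PP_{\mathrm{univ}}(y)$, the exchange relation \eqref{x-mutation} now carries the coefficient $y_k$ and the denominator $(1\oplus y_k) = 1+y_k$, so the computation that produced Theorem \ref{prop:x-Poisson:tn}(i) must be redone with these extra $y$-dependent factors present.

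First I would compute $\{x_i, x'_k\}$ for $i \neq k$ exactly as in \eqref{Pxx1}, but now differentiating the full numerator $y_k \prod_{j:b_{jk}>0} x_j^{b_{jk}} + \prod_{j:b_{jk}<0} x_j^{-b_{jk}}$ and the denominator $(1+y_k)x_k$. The new ingredient is that $\{x_i, y_k\} = p^{xy}_{ik} x_i y_k$ contributes terms through the $y_k$ in the numerator and through $1+y_k$ in the denominator. Requiring the result to again be log-canonical in $x'$ forces these $y$-dependent contributions to cancel against each other. By the assumption \eqref{xy-poisson} that $P^{xy}$ is diagonal, $p^{xy}_{ik}=0$ whenever $i\neq k$, so those cross-terms vanish automatically and the $x$--$x$ analysis collapses to the coefficient-free one: the condition $P^x B = cD$ reappears verbatim, giving $P^x = P$ with $P$ exactly the matrix classified in Theorem \ref{th:x-Poisson} or Theorem \ref{th:x-Poisson-infinite}, according to whether $B$ is (left-)invertible.

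Next I would pin down the diagonal entries $p_i$ of $P^{xy}$. Here I would examine the compatibility of $\{x_i, y_j\}$ under mutation $\mu_k$. Mutating $y$ via \eqref{y-mutation} introduces factors $\bigl(\tfrac{y_k}{1+y_k}\bigr)^{b_{ki}}$ and $(1+y_k)^{-b_{ki}}$, and mutating $x_k$ introduces the numerator above; demanding that $\{x'_i, y'_j\}$ stays diagonal log-canonical, together with the already-fixed forms $P^y = c_y DB$ and $P^x B = c_x D$, should determine $p_i$ up to the single scaling $c_y$. The relation \eqref{fx-poisson} from Proposition \ref{lemma:f-poisson}, namely $\{f_i, x_j\} = -c\,\delta_{ij} d_i f_i x_j$, is the guiding analogue: since $\hat y_i = y_i \prod_j x_j^{b_{ji}}$ couples $y$ to the combinations $f_i$, consistency of the $\hat y$-system forces $P^{xy} = c_y D$, i.e. $p_i = c_y d_i$ with the \emph{same} constant $c_y$ appearing in \eqref{yy-poisson}. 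I expect the main obstacle to be precisely this step: verifying that the constant in the $x$--$y$ block must coincide with $c_y$ rather than being an independent parameter, which requires carefully tracking the $y_k$ and $(1+y_k)$ factors through both the numerator and denominator of \eqref{x-mutation} and checking that every non-log-canonical term cancels. Once $P^{xy} = c_y D$ and $P^x = P$ are established, the statement follows directly from the two structure theorems, with the three cases (invertible, left-invertible, non-invertible) inherited unchanged from them.
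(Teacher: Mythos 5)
Your proposal follows essentially the same route as the paper's proof: the paper likewise uses the diagonality assumption \eqref{xy-poisson} to reduce the $\{x_i,x'_k\}$ computation to the coefficient-free case of Theorem \ref{prop:x-Poisson:tn}, and it pins down $P^{xy}=c_yD$ by exactly the computation you outline --- expanding $\{x'_k,y'_j\}$ for $k\neq j$ with $b_{kj}\neq 0$ and demanding that the $y_k$- and $(1+y_k)$-dependent terms cancel, which forces $p_k=c_yd_k$ and $p_j=c_yd_j$. The only differences are cosmetic: the paper determines $P^{xy}$ first and $P^x$ second, and it invokes \eqref{fx-poisson} only as motivation for the diagonal ansatz, not as part of the argument.
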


\begin{proof}
We determine $P^{xy}$ and $P^x$ in this order. 
In this proof we consider the mutation at $k \in I$,
and write $(B',x',y') = \mu_k(B,x,y)$, 
$\mathbb{X}_k^+ = \prod_{l:b_{lk} > 0} x_l^{b_{lk}}$
and $\mathbb{X}_k^- = \prod_{l:b_{lk} < 0} x_l^{-b_{lk}}$.

For $k \neq j$ and $b_{kj}> 0$, we have
\begin{align*}
  \{x_k', y_j' \} 
  &=
  \left\{ \frac{y_k \mathbb{X}_k^+ + \mathbb{X}_k^-}{(1 + y_k)x_k},~
          y_j \left(\frac{y_k}{1 + y_k}\right)^{b_{kj}} \right\}
  \\
  &= \frac{y_k \mathbb{X}_k^+ + \mathbb{X}_k^-}{1 + y_k}
     \left\{ \frac{1}{x_k}, 
            \left(\frac{y_k}{1 + y_k}\right)^{b_{kj}} \right\} y_j
     \\
     & \quad
     + \left(\frac{\mathbb{X}_k^+}{x_k} 
             \left\{\frac{y_k}{1 + y_k}, y_j \right\}
             + \frac{\mathbb{X}_k^-}{x_k} 
             \left\{\frac{1}{1 + y_k}, y_j \right\}
             + \frac{1}{(1 + y_k)x_k} \{ \mathbb{X}_k^-, y_j \}
       \right) \left(\frac{y_k}{1 + y_k}\right)^{b_{kj}}  
     \\
     &= -x_k' b_{kj} p_k y_j' \frac{1}{1+y_k}
     \\ & \quad 
     + \frac{1}{(1+y_k)x_k}
       \left(\mathbb{X}_k^+ d_k b_{kj} c_y \frac{y_k}{1+y_k}
             - \mathbb{X}_k^-d_k b_{kj} c_y
               \frac{y_k}{1+y_k} 
             - \mathbb{X}_k^- b_{jk}p_j
       \right) y_j'.
\end{align*}
This should be zero by \eqref{xy-poisson}.
Thus we obtain $p_k = c_y d_k$ and $p_j = c_y d_j$.
For $b_{kj}< 0$, we similarly obtain $p_k = c_y d_k$ and  
$p_j = c_y d_j$ by calculating $\{x_k', y_j'\} = 0$. 
Then we obtain $P^{xy} = c_y D$.

Due to \eqref{xy-poisson},
the computation of $\{x_i, x_k'\}$ is essentially same as that in
the proof of Theorem \ref{prop:x-Poisson:tn} (i), and we obtain $P^x = P$.
\end{proof}

%%%%%%%%%%%%%%%%%%%%%%%%%%%%%%
\subsection{Poisson structure}
%%%%%%%%%%%%%%%%%%%%%%%%%%%%%%

Consider the cluster pattern $t'\mapsto (B',x',y')$ ($t' \in \mathbb{T}_I$)
for the cluster algebra $\mathcal{A}(B,x,y)$ in \S \ref{subsec:def}.
In the same manner as \S \ref{subsec:xxx}, we endow
each seed $(B',x',y')$ at $t'\in \mathbb{T}_I$  
with a matrix $\mathcal{P}'$ composed by a skew-symmetric rational matrix 
$P^{x \prime}=(p^{x\prime}_{ij})_{i,j \in I}$
which satisfies the properties (i) and (ii) in \S \ref{subsec:xxx},
and the matrices $P^{y \prime} = c_y DB'$ and $P^{xy \prime} = c_yD$. 
We  call the assignment $t'\mapsto (B',x',y';\mathcal{P}')$
a {\it Poisson structure} for $\mathcal{A}(B,x,y)$,
and call $\mathcal{P}'$ \eqref{big-P}
the {\it Poisson matrix} at $t' \in \mathbb{T}_I$. 

\begin{Remark}
The Poisson algebra $\mathcal{PA}$ on $\Q \PP(u)$ 
generated by \eqref{xy-poisson-all} with \eqref{yy-poisson} and 
\eqref{xy-xx-poisson} is 
a generalization of that introduced in \cite[\S 2.2]{FockGon07}.
We obtain \cite[\S 2.2]{FockGon07} by setting $P^x=O$.
We note that $x$ with $P^x$ and $y$ with $P^y$ respectively
generate the Poisson subalgebras of $\mathcal{PA}$.
\end{Remark}

%%%%%%%%%%%%%%%%%%%%%%%%%%%%
\subsection{Compatible $2$-form}
%%%%%%%%%%%%%%%%%%%%%%%%%%%%

We fix $c_y \neq 0$ and $c_x$ as $c_x + c_y \neq 0$ 
in \eqref{yy-poisson} and \eqref{xy-xx-poisson}.
\begin{Lemma}
The $2$-form compatible with the Poisson structure
is given by 
\begin{align*}
  \Omega 
  = \frac{1}{2} \sum_{i,j \in I} b_{ij} d_j^{-1} 
    \frac{dx_i}{x_i} \wedge \frac{dx_j}{x_j} 
    - \sum_{i \in I} d_i^{-1} \frac{dx_i}{x_i} \wedge \frac{dy_i}{y_i}    
    + \frac{1}{2 c_y} \sum_{i,j \in I} d_i^{-1} p_{ij} d_j^{-1} 
      \frac{dy_i}{y_i} \wedge \frac{dy_j}{y_j}. 
\end{align*}
\end{Lemma}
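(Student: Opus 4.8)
The plan is to verify that the given $2$-form $\Omega$ is compatible with the Poisson structure by checking the defining condition for compatibility, namely that the ``product'' of the Poisson matrix $\mathcal{P}$ and the matrix of the $2$-form equals a nonzero scalar multiple of the identity. Since we are now working with both $x$ and $y$ variables, I would first assemble the $2 \times 2$ block matrix $\mathcal{W}$ that represents $\Omega$ in the basis $\{dx_i/x_i, dy_i/y_i\}$. Reading off the coefficients from the three sums in $\Omega$, the block corresponding to $\frac{dx_i}{x_i}\wedge\frac{dx_j}{x_j}$ is $W^{xx}=(b_{ij}d_j^{-1})_{i,j\in I}=BD^{-1}$, the off-diagonal block coupling $x$ and $y$ is $W^{xy}=-D^{-1}$ (diagonal), and the $yy$-block is $W^{yy}=\frac{1}{c_y}D^{-1}PD^{-1}$. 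Thus I set
\begin{align*}
\mathcal{W} = \begin{pmatrix}
              BD^{-1} & -D^{-1} \\ D^{-1} & \frac{1}{c_y}D^{-1}PD^{-1}
              \end{pmatrix}.
\end{align*}

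Next I would compute the product $\mathcal{P}\,\mathcal{W}$ block by block, using the explicit forms $P^x=P$, $P^{xy}=c_yD$, and $P^y=c_yDB$ from Proposition \ref{prop:xy-poisson} together with the key identity $PB=c_xD$ (or $PB=O$ in the non-invertible case) from Theorem \ref{th:x-Poisson}/\ref{th:x-Poisson-infinite}. The $(1,1)$-block gives $P^x(BD^{-1}) + P^{xy}(D^{-1}) = PBD^{-1} + c_yD\cdot D^{-1} = c_xDD^{-1} + c_y\mathbb{I} = (c_x+c_y)\mathbb{I}$; here the choice $c_x+c_y\neq 0$ in the hypothesis is exactly what makes the scalar nonzero. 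The $(1,2)$-block gives $P^x(-D^{-1}) + P^{xy}\cdot\frac{1}{c_y}D^{-1}PD^{-1} = -PD^{-1} + c_yD\cdot\frac{1}{c_y}D^{-1}PD^{-1} = -PD^{-1}+PD^{-1}=O$, using that $D$ and $D^{-1}$ commute. The $(2,1)$-block, involving $-P^{xy\,T}=-c_yD$ and $P^y=c_yDB$, similarly collapses, and the $(2,2)$-block should reduce to $(c_x+c_y)\mathbb{I}$ as well after invoking $PB=c_xD$ and skew-symmetry of $DB$.

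The remaining work is to assemble these into $\mathcal{P}\mathcal{W}=(c_x+c_y)\mathbb{I}$, which establishes the compatibility condition with the nonzero constant $d=c_x+c_y$. I would also remark that this recovers the purely $x$-variable $2$-form $\omega$ of \S\ref{subsec:xxx} when the $y$-contributions are dropped, consistent with the condition $PW=d\,\mathbb{I}$ stated there. I expect the main obstacle to be the careful bookkeeping in the off-diagonal and $(2,2)$ blocks: one must track the transpose in the block $-P^{xy\,T}$ in $\mathcal{P}$, keep the placement of $D^{-1}$ factors consistent between $W^{yy}$ and the diagonal $P^{xy}$, and, in the infinite index-set case, justify each matrix associativity exactly as in the proof of Theorem \ref{th:x-Poisson-infinite}, where only diagonal factors or the established relations $PB=c_xD$ and $RB=O$ are used so that the manipulations remain valid despite non-uniqueness of inverses.
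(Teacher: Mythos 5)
Your proposal is correct and follows essentially the same route as the paper: the paper's proof assembles exactly the same block matrix $\mathcal{W} = \begin{pmatrix} BD^{-1} & -D^{-1} \\ D^{-1} & c_y^{-1}D^{-1}PD^{-1} \end{pmatrix}$ and verifies $\mathcal{P}\mathcal{W} = (c_x+c_y)\mathbb{I}$ using $PB=c_xD$, $DB=-B^TD$ and $P^T=-P$. The only detail you leave implicit in the $(2,2)$-block is that, in addition to the skew-symmetry of $DB$, one also needs $P^T=-P$ to conclude $DBD^{-1}PD^{-1} = -B^TPD^{-1} = c_x\mathbb{I}$, which is harmless since skew-symmetry of $P$ is part of the definition of the Poisson structure.
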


\begin{proof}
Let $\mathcal{W}$ be the matrix
which correspond to the the above $2$-form:  
$$
  \mathcal{W} = \begin{pmatrix} 
                B D^{-1} & - D^{-1} \\ D^{-1} & c_y^{-1}D^{-1} P D^{-1} 
                \end{pmatrix}.
$$
By using $PB = c_xD$, $DB = -B^TD$ and $P^T = -P$,
we obtain $\mathcal{P} \mathcal{W} = (c_x + c_y)\mathbb{I}$. 
Thus the claim follows.
\end{proof}
 
\begin{Remark}
By setting $P=O$ in $\Omega$, 
we obtain the $2$-form studied in \cite[\S 2.2]{FockGon07}.
\end{Remark}

%%%%%%%%%%%%%%%%%%%%%%%%%%%%%%%%%%%%%%%
\section{Poisson bracket for difference equations}
%%%%%%%%%%%%%%%%%%%%%%%%%%%%%%%%%%%%%
\subsection{The discrete LV equation}
%%%%%%%%%%%%%%%%%%%%%%%%%%%%%%%%%%%%
\subsubsection{Mutation compatible Poisson bracket}
\label{sec:dLV}

The infinite matrix $B$ \eqref{B-LV} does not have a left inverse
because
$$
  b_{j,3k} + b_{j,3k+1} + b_{j,3k+2} = 0, \quad k,j \in \Z
$$
holds. Thus the Poisson structure for $\mathcal{A}(B,x)$ 
is given by a skew-symmetric solution $P$ to 
$PB=O$ (due to Theorem \ref{th:x-Poisson-infinite} (ii)).
In this subsection we study the general skew-symmetric solution.

Define $3$ by $3$ submatrices of $P$:
$$
P(i,j) =   
\begin{pmatrix}
  p_{3i,3j} & p_{3i,3j+1} & p_{3i,3j+2} \\
  p_{3i+1,3j} & p_{3i+1,3j+1} & p_{3i+1,3j+2} \\
  p_{3i+2,3j} & p_{3i+2,3j+1} & p_{3i+2,3j+2} 
\end{pmatrix},
\quad i,j \in \Z.
$$
Remark that $P(i,i)$ is skew-symmetric and that we
have $P(i,j) = -P(j,i)^T$ for $i \neq j$, due to the skew-symmetry of $P$.
Fix arbitrary two maps $a$ and $b$ from $\Z$ to $\Q$, 
and define a family of $3$ by $3$ diagonal matrices:
$$
\{Q_{i,j} = \mathrm{diag}(q_{i,j},q_{i,j}+a(i)-a(j),q_{i,j}+b(i)-b(j)) ~|~ 
i,j \in \Z; i \neq j; q_{i,j} \in \Q \}.
$$ 
Set a $3$ by $3$ matrix:
$$
S = 
\begin{pmatrix}
  1 & 1 & 1 \\
  1 & 1 & 1 \\
  1 & 1 & 1 
\end{pmatrix}.
$$

\begin{Theorem}\label{thm:LV-poisson}
The general skew-symmetric solution $P$ to $PB=O$ is given by
\begin{align}
  \label{P00}
  &P(0,0) = 
  \begin{pmatrix}
    0 & a_0 & b_0 \\
    -a_0 & 0 & c_0 \\
    -b_0 & -c_0 & 0
  \end{pmatrix}
  &a_0, b_0, c_0 \in \Q,
  \\
  \label{Pii}
  &P(i,i) =  P(0,0) + Q_{0,i} S - S Q_{0,i} & i \neq 0,
  \\
  \label{Pij}
  &P(i,j) = P(0,0) + Q_{i,j} S & i < j,
  \\
  \label{Pji}
  &P(j,i) = -P(i,j)^T & i < j.
\end{align}
\end{Theorem}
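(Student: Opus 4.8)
The plan is to exploit the block structure of $B$ and reduce $PB=O$ to a simple three-term recursion between consecutive $3\times 3$ blocks of $P$.

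First I would cut $B$ and $P$ into the $3\times 3$ blocks $P(i,j)$ introduced above and the analogous $B(i,j)=(b_{3i+r,\,3j+c})_{r,c=0,1,2}$. By the periodicity \eqref{eq:b11} and the explicit entries \eqref{B-LV}, $B$ is block-tridiagonal: $B(i,i)=B_0$, $B(i,i+1)=B_+$, $B(i,i-1)=B_-=-B_+^{\,T}$, and $B(i,j)=0$ for $|i-j|\ge 2$, where
\[
B_0=\begin{pmatrix}0&1&-1\\-1&0&1\\1&-1&0\end{pmatrix},\quad
B_+=\begin{pmatrix}0&0&0\\1&-1&0\\-1&1&0\end{pmatrix},\quad
B_-=\begin{pmatrix}0&-1&1\\0&1&-1\\0&0&0\end{pmatrix}.
\]
Block multiplication then turns $PB=O$ into the statement that, for every $i$ and every $j$,
\[
P(i,j-1)\,B_+ + P(i,j)\,B_0 + P(i,j+1)\,B_- = 0 .
\]

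The next step is to solve this recursion. Writing out the three columns of each term (and using that the columns of $B_0,B_\pm$ are built from the vectors $(1,-1,0)$, $(0,1,-1)$ and their negatives), the recursion collapses to the single requirement that the two successive column differences $P(i,j)^{(1)}-P(i,j)^{(2)}$ and $P(i,j)^{(2)}-P(i,j)^{(3)}$ are independent of $j$ for each fixed $i$. Equivalently, any two blocks in the same block-row differ by a matrix whose rows are constant, i.e.\ a matrix of the form $w\,\mathbf 1^{T}$. This is exactly the $S$-shaped freedom appearing in \eqref{Pii}--\eqref{Pij}, and it reflects $B_0\mathbf 1=B_\pm\mathbf 1=0$, the block form of the relation $\sum_c b_{\,\cdot\,,3k+c}=0$.

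Third, I would impose skew-symmetry. Since $PB=O$ is a condition on block-rows while $P(j,i)=-P(i,j)^{T}$ trades block-rows for block-columns, the transpose relation promotes the row-constant freedom just found into a matching column-constant freedom and couples the data of distinct block-rows. Fixing $P(0,0)$ as the base skew-symmetric $3\times 3$ matrix \eqref{P00}, I would record the $j$-dependence inside each block-row by scalars $q_{i,j}$ and the block-row dependence by the two functions $a,b\colon\Z\to\Q$ (the two independent diagonal shifts), and then read off \eqref{Pii}, \eqref{Pij}, \eqref{Pji}. The sufficiency direction is then a direct check: each correction in \eqref{P00}--\eqref{Pji} is row- or column-constant, hence annihilated on the right by $B_0,B_\pm$ because $B_0\mathbf 1=B_\pm\mathbf 1=0$.

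The main obstacle is the completeness (necessity) direction together with the parameter bookkeeping: one must verify that the solution space of the recursion, once cut down by skew-symmetry, is \emph{exactly} spanned by $a_0,b_0,c_0$, the maps $a,b$, and the scalars $q_{i,j}$, with no residual freedom and no hidden relations. Carefully normalizing the reference block of each block-row against $P(0,0)$ — so that the diagonal blocks \eqref{Pii} and the off-diagonal blocks \eqref{Pij} are consistent and the precise appearance of $a(i)-a(j)$ and $b(i)-b(j)$ is pinned down — is where the genuine work lies. The infinite index set causes no real difficulty here, since each entry of $PB$ is a finite sum (the columns of $B$ have finite support), so Theorem \ref{th:x-Poisson-infinite}(ii) applies verbatim and all the manipulations above are entrywise legitimate.
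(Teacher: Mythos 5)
Your first two steps are sound and are, in substance, the paper's own argument recast in block language: the block-tridiagonal recursion $P(i,j-1)B_+ + P(i,j)B_0 + P(i,j+1)B_- = O$ is exactly the system \eqref{PB=Sigma}, and your reduction of it to the statement that any two blocks in the same block row differ by a row-constant matrix $w\mathbf{1}^T$ is exactly \eqref{P-condition}. The genuine gap is in your sufficiency check, and it is not cosmetic. A column-constant matrix $\mathbf{1}v^T$ is \emph{not} annihilated on the right by $B_0$ or $B_\pm$: for instance $\bigl(\mathbf{1}(0,1,0)\bigr)B_0$ has every row equal to $(0,1,0)B_0=(-1,0,1)\neq(0,0,0)$. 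What kills row-constant corrections on the right is $\mathbf{1}^TB_0=\mathbf{1}^TB_\pm=0$ (vanishing \emph{column} sums); the identities $B_0\mathbf{1}=B_\pm\mathbf{1}=0$ you cite concern row sums and are irrelevant for right multiplication. In the true solution the column-constant piece $-SQ_{0,i}$ is the \emph{same} in every block of block row $i$ (it sits inside $P(i,i)$), and it is annihilated only through the identity $B_++B_0+B_-=O$, i.e.\ by cancellation across three consecutive blocks, never blockwise.

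This error matters because it conceals a contradiction with your own (correct) necessity criterion. Taking \eqref{Pii} and \eqref{Pij} literally, $P(i,i)-P(i,j)=Q_{0,i}S-SQ_{0,i}-Q_{i,j}S$ contains the column-constant term $-SQ_{0,i}$, which is row-constant only when $a(i)=a(0)$ and $b(i)=b(0)$; so for non-constant $a,b$ the displayed family violates the row-constant criterion and does not satisfy $PB=O$ at all (one checks directly that the entries $p_{3i,3k+1}-p_{3i,3k}$ then depend on $k$). Carrying your plan through honestly does not let you ``read off'' \eqref{Pij}; it leads, as the paper's own proof does, to $P(i,j)=P(i,i)+Q_{i,j}S$ for $i<j$, with the base point being the diagonal block \eqref{Pii} of the same block row rather than $P(0,0)$, and with the structure of $Q_{i,j}$ (the differences $a(i)-a(j)$, $b(i)-b(j)$ on the diagonal) forced by matching the two computations of each $P(j,j)$ through block rows $0$ and $i$. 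In short, your proposal needs two repairs: replace the blockwise annihilation claim by the pair of identities $\mathbf{1}^TB_0=\mathbf{1}^TB_\pm=0$ and $B_++B_0+B_-=O$, and correct the target formula for the off-diagonal blocks accordingly; your proposed sufficiency argument, being false, exactly masks the discrepancy between the printed form of \eqref{Pij} and what the reduction actually yields.
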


\begin{proof}
The equation $PB=O$ is equivalent to
\begin{align}\label{PB=Sigma}
  \begin{split}
  &(PB)_{i,3k} = p_{i,3k-2} - p_{i,3k-1} - p_{i,3k+1} + p_{i,3k+2} 
  = 0,
  \\ 
  &(PB)_{i,3k+1}
  =-p_{i,3k-2} + p_{i,3k-1} +p_{i,3k} - p_{i,3k+2} - p_{i,3k+3}+p_{i,3k+4} = 0,
  \\
  &(PB)_{i,3k+2} = -p_{i,3k} + p_{i,3k+1} + p_{i,3k+3} - p_{i,3k+4} 
  = 0,
  \end{split}
\end{align} 
for all $i,k \in \Z$. 
The equations in \eqref{PB=Sigma} are solved as
\begin{align}\label{P-condition}
  (p_{i,3k}, p_{i,3k+1}, p_{i,3k+2}) 
  = (p_{i,3k}, p_{i,3k}+s_i, p_{i,3k}+t_i),
\end{align}
with some $s_i, t_i \in \Q$ independent of $k$.
We define a family of $3$ by $3$ diagonal matrices:
$$
\{\tilde{Q}_{i,j} = \mathrm{diag}(q_{i,j}^{(1)},q_{i,j}^{(2)},q_{i,j}^{(3)})
~|~ i,j \in \Z; i \neq j; q_{i,j}^{(1)},q_{i,j}^{(2)},q_{i,j}^{(3)} \in \Q \}.
$$ 

The matrix $P(0,0)$ is generally written as \eqref{P00}.
For any $j \in \Z \setminus \{0\}$,
the matrix $P(0,j)$ is determined by \eqref{P-condition} ($i=0,1,2;k=j$) as
$$
P(0,j) = P(0,0) + \tilde{Q}_{0,j} S.
$$
Then we obtain $P(j,0) = -P(0,j)^T$ due to the skew-symmetry of $P$.
When $j > 0$ (resp. $j < 0$),
the skew-symmetric matrix $P(j,j)$ is determined by $P(j,0)$
and \eqref{P-condition} ($i=3j,3j+1,3j+2; k=j+1$ \text{(resp. $k=j-1$)}) as  
$$
P(j,j) = P(0,0) + \tilde{Q}_{0,j} S - S \tilde{Q}_{0,j}. 
$$
By starting with one of these submatrix $P(i,i)$,
for any $j \neq i$ we obtain
$$
P(i,j) = P(i,i) + \tilde{Q}_{i,j} S, \quad 
P(j,i) = -P(i,j)^T, \quad 
P(j,j) = P(i,i) + \tilde{Q}_{i,j} S - S \tilde{Q}_{i,j},
$$ 
in the same manner.
Here the above two expressions of $P(j,j)$ should be compatible:
$$
P(j,j) 
= P(0,0) + \tilde{Q}_{0,j} S - S \tilde{Q}_{0,j}
= P(i,i) + \tilde{Q}_{i,j} S - S \tilde{Q}_{i,j},
$$         
which is equivalent to 
$$
q_{0,j}^{(n)} - q_{0,i}^{(n)} - q_{i,j}^{(n)} 
=
q_{0,j}^{(m)} - q_{0,i}^{(m)} - q_{i,j}^{(m)},
\quad n,m \in \{1,2,3\}.
$$ 
This relation is satisfied by 
$$
q_{i,j}^{(2)} = q_{i,j}^{(1)} + a(i) - a(j),
\quad
q_{i,j}^{(3)} = q_{i,j}^{(1)} + b(i) - b(j),
$$
where $a$ and $b$ are any map from $\Z$ to $\Q$.
Thus we obtain 
$$
\tilde{Q}_{i,j} = 
\mathrm{diag}(q_{i,j}^{(1)},q_{i,j}^{(1)}+a(i)-a(j),q_{i,j}^{(1)}+b(i)-b(j)).
$$
Finally the claim follows.
\end{proof}

%%%%%%%%%%%%%%%%%%%%%%%%%%%%%%%%%%%%
\subsubsection{Poisson structure with symmetry}
%%%%%%%%%%%%%%%%%%%%%%%%%%%%%%%%%%%%

With any solution $P$ to $PB=O$ in Theorem \ref{P00},
one can associate a Poisson structure for $\mathcal{A}(B,x)$.
Let $P(u)$ ($u\in \mathbb{Z}$) be the corresponding Poisson matrix for
$(B(u), x(u))$, defined through the same mutation sequence \eqref{LV-seq}
with $P(0)=P$.
The corresponding Poisson matrix $\mathcal{P}$ \eqref{big-P} also gives 
the Poisson structure for $\mathcal{A}(B,x,y)$.

In view of the discrete LV equation \eqref{eq:U1} and its bilinear form
\eqref{eq:T1} which are homogeneous for the variables $n$ and $t$, 
it is natural to consider Poisson structures satisfying
the symmetry and the periodicity for mutations:
\begin{align}
\label{eq:p11}
p(u)_{i+3,j+3}&=p(u)_{i,j},\\
\label{eq:p12}
p(u+1)_{i,j}&=p(u)_{i-1,j-1},\\
\label{eq:p13}
p(u+3)_{i,j}&=p(u)_{i,j},
\end{align}
where \eqref{eq:p13} is a consequence of 
\eqref{eq:p11} and \eqref{eq:p12}.
These are the same symmetry and the periodicity as $B(u)$
\eqref{eq:b11}--\eqref{eq:b13}, however,
they are not necessarily satisfied by a general solution $P$ to $PB=O$.
(On the other hand, the Poisson bracket for $y(u)$ automatically 
has these symmetry and periodicity due to \eqref{yy-poisson}.)

\begin{Prop}\label{prop:sym-P}
The matrix $P=P(0)$ yields a Poisson structure for $\mathcal{A}(B,x)$
with the symmetry and the periodicity \eqref{eq:p11}--\eqref{eq:p13}
if and only if $P$ has the following form:
\begin{align}
  \label{Piis}
  &P(i,i) = 
  \begin{pmatrix}
    0 & a_0 & 2a_0 \\
    -a_0 & 0 & a_0 \\
    -2a_0 & -a_0 & 0
  \end{pmatrix}
  & a_0 \in \Q;~ i \in \Z, 
  \\
  \label{P0js}
  &P(i,j) = P(0,0) + q_{j-i} S & q_{j-i} \in \Q;~ i < j, 
  \\
  \label{Pj0s}
  &P(j,i) = -P(i,j)^T & i < j.
\end{align}
\end{Prop}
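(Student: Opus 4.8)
The plan is to reduce the symmetry/periodicity conditions \eqref{eq:p11}--\eqref{eq:p13} on the whole family $\{P(u)\}$ to two conditions on $P=P(0)$ alone, and then read off what they impose on the general solution of Theorem \ref{thm:LV-poisson}. Let $\sigma$ denote the index shift $i\mapsto i+1$, acting on matrices by $(\sigma M)_{ij}=M_{i-1,j-1}$; then \eqref{eq:b12} reads $B(u+1)=\sigma B(u)$ and \eqref{eq:p12} reads $P(u+1)=\sigma P(u)$. First I would record that the Poisson exchange relation \eqref{P-mutation:tn} depends only on the combinatorial data $(B',P')$ and is equivariant under relabeling $I$ by $\sigma$; since moreover $\sigma(I_{\overline{k}})=I_{\overline{k+1}}$, the step $(B(u),P(u))\mapsto(B(u+1),P(u+1))$ along $\mu_{\overline{k}}$ (with $k\equiv u\bmod 3$) is intertwined by $\sigma$. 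An easy induction in both directions then shows that, \emph{provided} $P(1)=\sigma P(0)$, one has $P(u)=\sigma^{u}P(0)$ for all $u$; in that case \eqref{eq:p12} holds for every $u$, \eqref{eq:p11} for every $u$ is equivalent to its $u=0$ instance, and \eqref{eq:p13} is automatic as already noted. Taking $u=0$ instances for the converse, the Proposition reduces to the two conditions: (A) $P$ is block $3$-periodic, i.e.\ $P(i,j)$ depends only on $j-i$; and (B) $P(1)=\sigma P(0)$.

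Next I would impose (A) on the solution of Theorem \ref{thm:LV-poisson}. Block-periodicity forces $P(i,i)=P(0,0)$ for all $i$, hence $Q_{0,i}S-SQ_{0,i}=O$; since for diagonal $Q$ and all-ones $S$ the entry $(QS-SQ)_{mn}$ is the difference of the $m$-th and $n$-th diagonal entries of $Q$, this makes every $Q_{0,i}$ scalar, i.e.\ the maps $a,b$ are constant and $Q_{i,j}=q_{i,j}\mathbb{I}$. Then $P(i,j)=P(0,0)+q_{i,j}S$ for $i<j$, and block-periodicity forces $q_{i,j}=q_{j-i}$. Thus after (A) the solution is exactly \eqref{P0js}--\eqref{Pj0s}, but with $P(0,0)$ still carrying three free parameters $a_0,b_0,c_0$.

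Finally I would impose (B). Because the forward mutation points $3\Z$ are pairwise non-adjacent in $B$ (as $b_{3k,3k\pm3}=0$), the composite $\mu_{\overline{0}}$ is a product of commuting single mutations and \eqref{P-mutation:tn} applies simultaneously. Reading the neighbours of $3k$ from \eqref{B-LV} (namely $b_{3k\pm2,3k}=1$), the two entries with $i\in 3\Z$, $j\notin3\Z$ give
\begin{align*}
p(1)_{3k,3k+1}&=-p_{3k,3k+1}+p_{3k+2,3k+1}+p_{3k-2,3k+1}=-a_0-c_0+q_1,\\
p(1)_{3k,3k+2}&=-p_{3k,3k+2}+p_{3k+2,3k+2}+p_{3k-2,3k+2}=-b_0+c_0+q_1,
\end{align*}
whereas $(\sigma P)_{3k,3k+1}=p_{3k-1,3k}=-b_0+q_1$ and $(\sigma P)_{3k,3k+2}=p_{3k-1,3k+1}=-c_0+q_1$. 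Matching yields $b_0=a_0+c_0$ and $b_0=2c_0$, hence $c_0=a_0$ and $b_0=2a_0$, which is precisely \eqref{Piis}; note the $q_1$ cancel, so no constraint on the $q_m$ arises. For the converse I would verify $P(1)=\sigma P(0)$ on the remaining entries; the only delicate ones are those with both indices in $3\Z$, where the two single mutations genuinely interact. The clean way to dispatch these is to note that both $P(1)$ and $\sigma P(0)$ solve the same homogeneous system $M\,B(1)=O$ (for $P(1)$ by Theorem \ref{th:x-Poisson-infinite}(ii), for $\sigma P(0)$ since $\sigma(PB)=O$), so by the solution structure of Theorem \ref{thm:LV-poisson} applied to $B(1)$ they must coincide once they agree on the mixed entries already matched. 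I expect this last point --- controlling the both-in-$3\Z$ entries of the composite mutation --- to be the only real obstacle; everything else is the bookkeeping of \eqref{B-LV} and \eqref{P-mutation:tn}.
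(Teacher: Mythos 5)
Your proof is correct, and although it shares the paper's overall skeleton (reduce to the $u=0$ mutation step by shift-equivariance and induction, then compute against the parametrization of Theorem~\ref{thm:LV-poisson}), it is organized differently at two substantive points, and the difference buys something real. The paper channels everything through \eqref{eq:p12}: it claims that a solution of Theorem~\ref{thm:LV-poisson} satisfies \eqref{eq:p12} if and only if it has the form \eqref{Piis}--\eqref{Pj0s}, extracts the block relations from the exchange relation \eqref{P-mutation} together with \eqref{eq:p12} --- including the relation $p_{ij}=p_{i-1,j-1}$ for entries with \emph{both} indices in $3\Z$ --- and in the converse asserts $p'_{3i,3j}=p_{3i,3j}$ for those entries without displaying the two-step composite mutation behind it. You instead let your condition (A) $=$ \eqref{eq:p11} at $u=0$ alone produce the block structure ($a,b$ constant, $q_{i,j}=q_{j-i}$), use (B) $=$ \eqref{eq:p12} at $u=0$ only to pin down $P(0,0)$, and dispose of the entries with both indices in $3\Z$ by a uniqueness argument: $P(1)-\sigma P(0)$ is a skew-symmetric solution of $MB(1)=O$ whose diagonal blocks vanish, so its off-diagonal blocks are of the row-constant form $\tilde{Q}S$, and vanishing of the already-matched entries in each row forces vanishing of the remaining one. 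This division of labor is not cosmetic: one can check that with $a,b$ constant, $c_0=a_0$, $b_0=2a_0$ but \emph{arbitrary} $q_{i,j}$, one still has $P(1)=\sigma P(0)$ --- the composite mutation sends the $(3i,3j)$ entry $q_{i,j}$ exactly to $q_{i-1,j-1}$, matching the shift --- so \eqref{eq:p12} alone does \emph{not} force $q_{i,j}=q_{j-i}$; the paper's intermediate ``if and only if'' is too strong as stated (its use of $p_{ij}=p_{i-1,j-1}$ at both-in-$3\Z$ entries is where this enters), and \eqref{eq:p11} is genuinely needed at precisely the point where you invoke it. What your write-up leaves as ``bookkeeping'' (the remaining mixed and untouched entries in the converse, and the equivariance induction in both directions) does go through by direct computation with \eqref{B-LV}, so the proposal stands as a complete and, at the delicate point, more careful argument than the published one.
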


\begin{proof}
We first show that a skew-symmetric matrix $P$ in
Theorem \ref{P00} satisfies the condition \eqref{eq:p12}
if and only if $P$ has the form
\eqref{Piis}--\eqref{Pj0s}.
Suppose that $P$ satisfies \eqref{eq:p12}.
{}From the exchange relation of $P$ \eqref{P-mutation} and \eqref{eq:p12}
we obtain
\begin{align}
\label{Pi3kss}
&-p_{i,3k} + p_{i,3k-2} + p_{i,3k+2} = p_{i-1, 3k-1} & 
i \not\equiv 0 \text{ mod }3,
\\
&-p_{3k,j} + p_{3k-2,j} + p_{3k+2,j} = p_{3k-1,j-1} & 
j \not\equiv 0 \text{ mod }3,
\\
\label{Pijss}
&p_{i,j} = p_{i-1,j-1} & \text{otherwise}.  
\end{align}
The condition \eqref{Pijss} is written as
$$
p_{3l+2,3k+2} = p_{3l+1,3k+1} = p_{3l,3k} = p_{3l-1,3k-1},
\quad
p_{3l+1,3k+2} = p_{3l,3k+1}.
$$ 
From the first relation, we see that the maps $a$ and $b$ 
are constant maps, {\it i.e.}, 
$Q_{l,k} = \mathrm{diag}(q_{l,k},q_{l,k},q_{l,k})$, 
and that $Q_{l,k} = Q_{l+1,k+1}$. 
Thus we obtain $P(i,i) = P(0,0)$ and \eqref{P0js}
for all $i \in \Z$. From the second relation and \eqref{Pi3kss},
we obtain $a_0=c_0$ and $b_0 = 2a_0$,
and \eqref{Piis} follows.
Conversely, suppose that $P$ has the form
\eqref{Piis}--\eqref{Pj0s}.
Then $(p'_{ij})_{i,j \in I} := \mu_{\overline{0}}(P)$ is obtained
as follows. We have 
\begin{align*}
&p'_{3i+k,3j+l} = p_{3i+k,3j+l} 
= \begin{cases} q_{j-i} = p_{3i+k-1,3i+k-1} & k=l=0,1,2 \\
                q_{i-j} + a_0 = p_{3i,3j+1} & k=1, ~l=2 \\
                q_{i-j} - a_0 =p_{3i+1,3j} & k=2, ~l=1 \\
  \end{cases},
\\
&p'_{3i,3j} = p_{3i,3j} = q_{j-i} = p_{3i-1,3j-1},
\\ 
&p'_{3i+1,3j} = -p_{3i+1,3j} + p_{3i+1,3j-2} + p_{3i+1,3j+2} 
\\
&\qquad = -(q_{j-i} - a_0) + q_{j-1-i} + (q_{j-i} + a_0) 
= q_{j-1-i} + 2 a_0 = p_{3i, 3j-1},
\\
&p'_{3i+2,3j} = -p_{3i+2,3j} + p_{3i+2,3j-2} + p_{3i+2,3j+2} 
\\
&\qquad = -(q_{j-i} - 2 a_0) + (q_{j-1-i}-a_0) + q_{j-i} 
= q_{j-1-i} + a_0 = p_{3i+1, 3j-1}, 
\end{align*}
for $i \leq j$, where we assume $q_0 = 0$. 
In the same way we obtain
$p'_{3i+k,3j+l} = p_{3i+k-1,3j+l-1}$ for $i>j$ and $k,l \in \{0,1,2\}$.
Therefore $p(1)_{i,j}=p(0)_{i-1,j-1}$.
Then, by induction we obtain (5.11).

Once we have \eqref{Piis}--\eqref{Pj0s}, it is satisfied that
\begin{align}\label{3-sym}
  p_{i+3,j+3} = p_{i,j} \quad i,j\in I. 
\end{align}
From \eqref{eq:p12} and \eqref{3-sym}, \eqref{eq:p11} follows. 
\end{proof}

%%%%%%%%%%%%%%%%%%%%%%%%%%%%%
\subsubsection{Periodic case}
%%%%%%%%%%%%%%%%%%%%%%%%%%%%%

For the completeness, we also consider the Poisson structure for the 
quiver $Q$ in Figure 1 with periodic boundary condition.
Namely, we fix a positive integer $m > 2$ and consider 
the $3m$ by $3m$ skew-symmetric matrix 
$\bar{B} = (b_{ij})_{0 \leq i,j \leq 3m-1}$ 
where $b_{ij}$ is given by \eqref{B-LV} 
with the index set $\Z / 3m \Z$.

The $3m$ by $3m$ Poisson matrix $\bar{P}$ for $(\bar{B},x)$
is obtained from \eqref{P00}--\eqref{Pji} by requiring 
$P(i,j) = P(i,j+m) = P(i+m,j)$ for $i,j \in \Z/m\Z$.
Then the maps $a$ and $b$ becomes constant maps, and 
$q_{i,j} = q_{i,j+m} = q_{i+m,j}$ required. Then we obtain the 
following:
\begin{Prop}\label{prop:P-periodic}
(i) The general skew-symmetric solution $\bar{P}$ 
to $\bar{P} \bar{B} =O$ is given by 
\begin{align*}
&P(0,0) = P(i,i) =   \begin{pmatrix}
    0 & a_0 & b_0 \\
    -a_0 & 0 & c_0 \\
    -b_0 & -c_0 & 0
  \end{pmatrix} & a_0,b_0,c_0 \in \Q, 
\\ 
&P(i,j) = P(0,0) + q_{i,j} S & q_{i,j} \in \Q; ~i<j,
\\
&P(j,i) = -P(i,j)^T & i<j,
\end{align*} 
for $0 \leq i,j \leq m-1$.

(ii) The above matrix $\bar{P}$ yields the symmetry and the periodicity 
\eqref{eq:p11}--\eqref{eq:p13} if and only if
$b_0=2a_0$, $c_0=a_0$ and $q_{i,j} = q_{j-i}$ for $0 \leq i < j \leq m-1$.
\end{Prop}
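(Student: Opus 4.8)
The plan is to treat part (i) as the cyclic counterpart of Theorem \ref{thm:LV-poisson} and part (ii) as the cyclic counterpart of Proposition \ref{prop:sym-P}, reusing both proofs and isolating the points where finiteness of the index set $\Z/3m\Z$ changes the outcome. For part (i) I would first note that $\bar{P}\bar{B}=O$ splits, column by column, into the local relations \eqref{PB=Sigma} now read with indices in $\Z/3m\Z$, and that these are solved as in \eqref{P-condition}, so that for each scalar row $i$ one has $(p_{i,3k},p_{i,3k+1},p_{i,3k+2})=(p_{i,3k},p_{i,3k}+s_i,p_{i,3k}+t_i)$ with $s_i,t_i$ independent of $k$; the only thing to check at this stage is that demanding $s_i,t_i$ be constant around the finite cycle $k\in\Z/m\Z$ produces no wrap-around obstruction, which is immediate since a constant sequence closes up. Passing to the $3\times 3$ blocks $P(i,j)$ and using $P(i,i)^{T}=-P(i,i)$ and $P(j,i)=-P(i,j)^{T}$ exactly as in the proof of Theorem \ref{thm:LV-poisson}, I obtain $P(i,j)=P(i,i)+\tilde{Q}_{i,j}S$ for a diagonal matrix $\tilde{Q}_{i,j}$, together with the drift relation $P(j,j)-P(i,i)=\tilde{Q}_{i,j}S-S\tilde{Q}_{i,j}$.

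The step I expect to be the main obstacle is to show that in the periodic case this drift vanishes, so that all diagonal blocks equal $P(0,0)$ and each $\tilde{Q}_{i,j}$ reduces to a scalar $q_{i,j}\mathbb{I}$; equivalently, that the maps $a,b$ of Theorem \ref{thm:LV-poisson} become constant. Writing each diagonal block $P(i,i)$ through its independent entries $a_i,b_i,c_i$ as in \eqref{P00}, the drift relation makes $a_i-b_i+c_i$ independent of $i$, whereas summing the commutators $\tilde{Q}_{i,i+1}S-S\tilde{Q}_{i,i+1}$ around the cycle $0\to1\to\cdots\to m-1\to0$ only forces the accumulated drift back to $P(0,0)$. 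Thus cyclic closure alone gives a net-zero rather than a termwise-zero condition, and the delicate point is to upgrade it, using the imposed periodicity $P(i,j)=P(i,j+m)=P(i+m,j)$ together with skew-symmetry, so as to kill each individual increment; I would attempt this by expressing the off-diagonal shift data through the scalars $q_{i,j}$ and eliminating the residual freedom, and I regard this constancy argument as the real heart of the proposition.

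Finally, for part (ii) I would impose the symmetry and periodicity \eqref{eq:p11}--\eqref{eq:p13} on the family produced in part (i), repeating the computation in the proof of Proposition \ref{prop:sym-P}: substituting the block form into the exchange relation \eqref{P-mutation} and using \eqref{eq:p12} yields relations of the type \eqref{Pi3kss}--\eqref{Pijss}, whose solution forces $b_0=2a_0$, $c_0=a_0$ and $q_{i,j}=q_{j-i}$, while conversely one verifies that these conditions are preserved by $\mu_{\overline{0}}$, so that \eqref{eq:p12}, and hence \eqref{eq:p11} and \eqref{eq:p13}, follow by induction. Since the diagonal-block data is already constant by part (i), the cyclic bookkeeping here is lighter than in the infinite case, and I expect this part to be essentially routine.
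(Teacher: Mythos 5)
Your part (i) is not yet a proof, and the gap sits exactly where you put it. You set up the cyclic analogue of the proof of Theorem \ref{thm:LV-poisson} correctly (row-shift relations \eqref{P-condition} close up with no obstruction, blocks $P(i,j)=P(i,i)+\tilde{Q}_{i,j}S$, drift $P(j,j)-P(i,i)=\tilde{Q}_{i,j}S-S\tilde{Q}_{i,j}$), but at the decisive step --- forcing every drift to vanish so that all diagonal blocks equal $P(0,0)$ and each $\tilde{Q}_{i,j}$ is scalar --- you only describe what you ``would attempt''. Note also that this is not the paper's route at all: the paper never reclassifies over $\Z/3m\Z$. It identifies a cyclic solution with an infinite solution that is periodic in the block index, and imposes $P(i,j)=P(i,j+m)=P(i+m,j)$ as a condition on the \emph{parameters} of the family \eqref{P00}--\eqref{Pji}; taking $i=j$ gives $Q_{0,i}S-SQ_{0,i}=Q_{i,i+m}S$, and since the left side is skew-symmetric while the right side has constant rows, both sides vanish, whence $a,b$ are constant and $q_{i,j}=q_{i,j+m}=q_{i+m,j}$. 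In your set-up, by contrast, the periodicity is tautological (a matrix over $\Z/3m\Z$ has nothing further to satisfy), so there is no extra condition left for you to ``use'': you have conflated the two settings.

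Worse, the step you postponed cannot be completed in your setting, so the gap is not fillable by more work of the same kind. Pushing your own relations to the end (the row shifts of block row $i$ are pinned by $P(i,i)$, and $P(j,i)=-P(i,j)^{T}$ must carry the shifts of block row $j$), the only constraint that $\bar{P}\bar{B}=O$ and skew-symmetry impose on the diagonal data $(a_i,b_i,c_i)$ is that $b_i-a_i-c_i$ is independent of $i$ --- precisely the invariant you identified --- and the individual drifts remain free. Concretely, for $m=3$ take, in $3\times 3$ blocks,
\begin{align*}
\bar{P}=\begin{pmatrix} O & -K^{T} & O \\ K & L & K \\ O & -K^{T} & O \end{pmatrix},
\qquad
K=\begin{pmatrix} 0&1&1\\ 0&1&1\\ 0&1&1 \end{pmatrix},
\qquad
L=\begin{pmatrix} 0&1&1\\ -1&0&0\\ -1&0&0 \end{pmatrix}.
\end{align*}
Every row of $\bar{P}$ has the constant-shift form \eqref{P-condition} (shifts $(0,0)$ in block rows $0,2$ and $(1,1)$ in block row $1$), so $\bar{P}\bar{B}=O$ by \eqref{PB=Sigma}, and $\bar{P}$ is skew-symmetric; yet $P(1,1)=L\neq O=P(0,0)$. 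Thus a from-scratch cyclic classification yields a strictly larger family than the one in Proposition \ref{prop:P-periodic}(i); the proposition describes the periodic specialization of the family \emph{as stated} in \eqref{P00}--\eqref{Pji}, whose equation \eqref{Pij} ties the row shifts of every off-diagonal block to $P(0,0)$ rather than to $P(i,i)$ (this is exactly the point where the statement and the proof of Theorem \ref{thm:LV-poisson} diverge, and your computation exposes it). Your outline of part (ii) --- rerunning the proof of Proposition \ref{prop:sym-P} --- is reasonable, but it is moot until part (i) is settled.
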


%%%%%%%%%%%%%%%%%%%%%%%%%%%%%
\subsubsection{Poisson bracket for $\hat{y_i}$}
%%%%%%%%%%%%%%%%%%%%%%%%%%%%%

Following \eqref{f-x} 
we define the variable $f_i(u)$ for $(i,u) \in \Z^2$ by 
\begin{align*}
f_i(u) 
= \frac{x_{i-2}(u+1)x_{i+2}(u+2)}{x_{i-1}(u+2)x_{i+1}(u+1)}.
\end{align*}
From Theorem \ref{th:x-Poisson-infinite}(ii) and Lemma \ref{lemma:f-poisson}, 
it is easy to see the following:

\begin{Corollary}\label{cor:f-x}
We have
\begin{align*}
&\{ f_{i}(u) , x_j(u) \} = 0, & i,j,u \in \Z,
\\
&\{ f_i(u) , f_j(v) \} = 0, & i,j,u,v \in \Z.
\end{align*}
\end{Corollary}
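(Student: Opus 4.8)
The plan is to recognize each $f_i(u)$ as one of the $f$-variables $\prod_j x_j^{b_{ji}}$ of \eqref{f-x}, attached to the appropriate seed in the sequence \eqref{LV-seq}, and then to exploit that the relevant constant vanishes here. Indeed, by Theorem~\ref{th:x-Poisson-infinite}(ii) the Poisson structure satisfies $P(w)B(w)=O$ at every vertex $w$, that is $P(w)B(w)=cD$ with $c=0$, so Proposition~\ref{lemma:f-poisson} will yield both $\{f_i,x_j\}=0$ and $\{f_i,f_j\}=0$ \emph{within a single cluster}. The real work is to identify $f_i(u)$ with such an $f$-variable and then to promote the single-cluster vanishing to the cross-time statements claimed.

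First I would reduce the definition of $f_i(u)$, which a priori mixes the clusters at times $u+1$ and $u+2$, to a single cluster. Let $w\in\{u,u+1,u+2\}$ be the unique integer with $w\equiv i\pmod 3$, i.e.\ the time at which $(w,i)$ is a forward mutation point. Tracking, via the $3$-periodic mutation pattern of Figure~1, which vertices are mutated between consecutive times in \eqref{LV-seq}, one checks that each of the four variables $x_{i-2}(u+1)$, $x_{i+2}(u+2)$, $x_{i-1}(u+2)$, $x_{i+1}(u+1)$ is left unchanged by every composite mutation separating its own time from $w$; hence all four may be rewritten at time $w$, and
\begin{align*}
f_i(u)=\frac{x_{i-2}(w)\,x_{i+2}(w)}{x_{i-1}(w)\,x_{i+1}(w)}=\prod_{j\in I}x_j(w)^{b(w)_{ji}}.
\end{align*}
Comparing the numerator and denominator with the T-system \eqref{x-rel} and the exchange relation \eqref{x-red-mutation} confirms that the exponent vector is exactly the $i$-th column of $B(w)$, so $f_i(u)$ is precisely the $i$-th $f$-variable at the seed $(B(w),x(w))$.

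With this identification the rest is immediate. Applying Proposition~\ref{lemma:f-poisson} at the seed $w$ with $c=0$ gives $\{f_i(u),x_j(w)\}=-c\,\delta_{ij}d_i f_i(u)x_j(w)=0$ for every $j\in I$. Since the cluster $\{x_j(w)\}_{j\in I}$ is a free generating set of the ambient field and $\{f_i(u),\,\cdot\,\}$ is a derivation, it follows that $f_i(u)$ Poisson-commutes with \emph{every} cluster variable $x_l(v)$ for all $l,v\in\Z$; in particular $\{f_i(u),x_j(u)\}=0$, the first claim. Finally, writing $f_j(v)$ as the Laurent monomial $\prod_l x_l(w')^{b(w')_{lj}}$ in the cluster at its own associated time $w'$ and using the Leibniz rule, each term carries a factor $\{f_i(u),x_l(w')\}=0$, whence $\{f_i(u),f_j(v)\}=0$.

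The main obstacle is the bookkeeping in the reduction step: one must verify, uniformly in $(u,i)$ and separately for the three residues of $i-u$ modulo $3$, that the four cluster variables in the definition of $f_i(u)$ are untouched by the intervening composite mutations, so that they genuinely collapse to the single cluster at time $w$ and reproduce the $i$-th column of $B(w)$. One should also record that, $I$ being infinite, all the products involved are finite because each column of $B$ has finitely many nonzero entries, so that $f_i(u)$ is a well-defined Laurent monomial and the derivation argument applies without any convergence issue.
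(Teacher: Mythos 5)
Your proposal is correct and takes essentially the same route as the paper: the paper's proof is precisely the citation of Theorem \ref{th:x-Poisson-infinite}(ii) (which forces $c=0$, i.e.\ $P'B'=O$ at every seed) together with Proposition \ref{lemma:f-poisson}. Your reduction of $f_i(u)$ to the $f$-variable of the single seed at time $w$ and the derivation argument for the cross-seed brackets simply fill in the bookkeeping that the paper leaves as ``easy to see.''
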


The variable $\hat{y}_i(u)$ \eqref{yhat}
is written as $\hat{y}_i(u) = y_i(u) f_i(u)$. 
For the Y-system \eqref{y-rel}, the set of the initial variables in $P_+$
is $\{\hat{y}_{3i+k}(k) ~|~ k=0,1,2,~i \in \Z\}$.
On the other hand, the set of the initial variables for 
the discrete LV equation \eqref{yhat-rel2} is smaller
as $\{\hat{y}_{3i}(0), \hat{y}_{3i+1}(1) ~|~ i \in \Z\}$,
and we give the Poisson brackets for this set: 
\begin{Prop}
Poisson bracket for the set    
$\{\hat{y}_{3i}(0), \hat{y}_{3i+1}(1) ~|~ i \in \Z\}$ is given by
\begin{align*}
&\{ \hat{y}_{3i}(0), \hat{y}_{3j}(0) \} = 
\{ \hat{y}_{3i+1}(1), \hat{y}_{3j+1}(1) \} = 0,
\\
&\{ \hat{y}_{3i}(0), \hat{y}_{3j+1}(1) \} = c_y (-\delta_{j,i} + \delta_{j,i-1})
\hat{y}_{3i}(0) \hat{y}_{3j+1}(1).
\end{align*}
\end{Prop}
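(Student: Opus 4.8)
The plan is to reduce everything to the known Poisson brackets for the $x$-variables and the coefficients $y$, using the identity $\hat{y}_i(u) = y_i(u) f_i(u)$ together with the structural vanishing results already established. Since $\hat y$ factors into a coefficient part $y$ and a cluster-variable part $f$, I would expand each bracket bilinearly via the Leibniz rule:
\begin{align*}
\{\hat y_i(u), \hat y_j(v)\}
= \{y_i(u) f_i(u),\, y_j(v) f_j(v)\}
\end{align*}
and collect the four resulting terms $\{y,y\}$, $\{y,f\}$, $\{f,y\}$, $\{f,f\}$. By Corollary \ref{cor:f-x} the mixed terms $\{f_i(u), x_j(v)\}=0$ force $\{f_i(u), y_j(v)\}=0$ as well (since $y$ and $x$ Poisson-commute in the structure of Proposition \ref{prop:xy-poisson} up to the diagonal $P^{xy}=c_yD$, and $f$ is a monomial in the $x$'s only with exponents summing appropriately), and the pure-$f$ term $\{f_i(u),f_j(v)\}$ vanishes identically by the same Corollary. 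Thus each Poisson bracket of $\hat y$'s collapses to the pure coefficient bracket $\{y_i(u), y_j(v)\}$ times $f_i(u)f_j(v)$, i.e. the whole computation is governed by $P^y = c_y DB'$ from \eqref{yy-poisson}.

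Next I would compute the relevant entries of $P^y$ for the specific initial variables. The set $\{\hat y_{3i}(0), \hat y_{3i+1}(1)\}$ sits at forward mutation points, so I would read off the brackets $\{y_{3i}(0), y_{3j}(0)\}$, $\{y_{3i+1}(1), y_{3j+1}(1)\}$ and $\{y_{3i}(0), y_{3j+1}(1)\}$ directly from $c_y DB(u)$ using the explicit entries of $B(0)$ and $B(1)$ in \eqref{B-LV} (together with the shift relation \eqref{eq:b12}). The same-level brackets should vanish because for the constant-solution/homogeneous case the relevant $B$-entries linking $3i$ to $3j$ (resp. $3i+1$ to $3j+1$) within one time-slice are zero after restricting to the forward mutation sublattice; the cross-level bracket $\{y_{3i}(0), y_{3j+1}(1)\}$ then picks up exactly the coefficient $c_y(-\delta_{j,i}+\delta_{j,i-1})$ from the nearest-neighbour structure of $B$ encoded in \eqref{B-LV}.

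The main obstacle I anticipate is bookkeeping the index arithmetic correctly: the Poisson matrix $P^y(u)=c_yDB(u)$ is attached to a whole sequence of seeds related by the composite mutations $\mu_{\overline k}$, and the two families $\hat y_{3i}(0)$ and $\hat y_{3i+1}(1)$ live at different values of $u$, so I must transport the brackets to a common seed using the mutation-periodicity \eqref{eq:b12} and verify that $D=\mathbb{I}$ (the matrix is skew-symmetric here, so $d_i=1$) before equating coefficients. The delicate point is confirming that $f_i(u)$ genuinely Poisson-commutes with $y_j(v)$ for all $i,j,u,v$ and not merely at equal times — this requires that the vanishing in Corollary \ref{cor:f-x} propagate across the infinite mutation sequence, which follows because each $f_i(u)$ and each $y_j(v)$ is a cluster variable/coefficient obtained through a finite subsequence of mutations, so the bracket is computed in a common finite seed where Proposition \ref{prop:xy-poisson} and $P^{xy}=c_yD$ apply. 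Once that is in place, the stated formulas follow by substituting the explicit $B$-entries and simplifying.
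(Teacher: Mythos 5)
There is a genuine gap, and it is fatal: your reduction rests on the claim that the mixed brackets $\{f_i(u),y_j(v)\}$ all vanish, and that claim is false. Corollary \ref{cor:f-x} only gives $\{f_i(u),x_j(u)\}=0$ and $\{f_i(u),f_j(v)\}=0$; these follow from $P^xB=O$ (the case $c=0$ of Theorem \ref{th:x-Poisson-infinite}). The $f$--$y$ brackets are governed instead by $P^{xy}=c_yD$ from \eqref{xy-xx-poisson}, which is \emph{nonzero} (and must be, otherwise the proposition would be trivial). Concretely, from \eqref{f-x} and $\{x_l,y_i\}=c_yd_l\delta_{li}x_ly_i$ one gets
\[
\{y_i,f_j\}=-c_y\,d_i\,b_{ij}\,y_if_j,
\]
which is nonzero whenever $b_{ij}\neq 0$; your parenthetical justification (``exponents summing appropriately'') fails because the relevant sum collapses to the single entry $d_ib_{ij}$ of $DB$, not to zero. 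The paper's proof uses only the restricted vanishing $\{y_i,f_j\}=0$ for indices where the corresponding $b_{ij}$ vanishes (e.g.\ both indices $\equiv 0$ mod $3$), which is what kills the same-level bracket $\{\hat y_{3i}(0),\hat y_{3j}(0)\}$; it deliberately keeps the mixed terms everywhere else.

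The dropped terms are exactly the ones that determine the answer in the cross-level bracket, so your method produces the wrong sign. Writing $x'=\mu_{\overline{0}}(x)$, $y'=\mu_{\overline{0}}(y)$, one has $\hat y_{3j+1}(1)=y'_{3j+1}\frac{x_{3j-1}x'_{3j+3}}{x'_{3j}x_{3j+2}}$, and the pure coefficient piece is $\{y_{3i},y'_{3j+1}\}=c_y(\delta_{j,i}-\delta_{j,i-1})\,y_{3i}y'_{3j+1}$; hence your collapse to ``$\{y,y\}$ times $ff$'' yields $c_y(\delta_{j,i}-\delta_{j,i-1})\hat y_{3i}(0)\hat y_{3j+1}(1)$, the \emph{negative} of the stated result. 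The two mixed pieces you discard, $\{y_{3i},x'_{3j+3}/x'_{3j}\}$ and $\{x_{3i-2}/x_{3i+1},y'_{3j+1}\}$, are each nonzero (using, e.g., $\{y_{3i},x'_{3j}\}=c_y\delta_{i,j}y_{3i}x'_{3j}$ and $\{x_{3i+1},y'_{3j+1}\}=c_y\delta_{i,j}x_{3i+1}y'_{3j+1}$) and each contributes $c_y(\delta_{j,i-1}-\delta_{j,i})\hat y_{3i}(0)\hat y_{3j+1}(1)$; summing all three pieces gives the correct $c_y(-\delta_{j,i}+\delta_{j,i-1})$. (Your same-level brackets happen to come out right, but only because there the relevant entries $b_{3i,3j}$ vanish so each mixed term is individually zero --- a special feature of those indices, not an instance of a general $\{f,y\}=0$.) The second half of your plan, reading $\{y_{3i}(0),y_{3j+1}(1)\}$ off a single matrix $c_yDB(u)$, also does not parse as stated, since the two variables live in different seeds; one must express $y'_{3j+1}$ in the initial seed via \eqref{y-mutation} and compute, as the paper does.
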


\begin{proof}
Due to Proposition \ref{prop:xy-poisson} and 
Corollary \ref{cor:f-x}, the Poisson bracket for 
$\hat{y}_{i}(u)$ is determined by the matrices $B$,
and independent of the Poisson matrix $P$.
Then, the coefficient $p_{ij}^{\hat{y}}(u)$ of 
the Poisson bracket $\{\hat{y}_{i}(u), \hat{y}_{j}(u) \}
= p_{ij}^{\hat{y}}(u) \hat{y}_{i}(u) \hat{y}_{j}(u)$
for $\hat{y}_{i}(u)$ has the same symmetry and periodicity as
\eqref{eq:p11}--\eqref{eq:p13}.

Note that $\{ y_{i}, f_j \} = 0$ if $i \equiv j$ mod $3$.
It is easy to see 
\begin{align*}
\{ \hat{y}_{3i}(0), \hat{y}_{3j}(0) \} 
= \{y_{3i} f_{3i}, y_{3j} f_{3j} \} = 0.
\end{align*}
Thus $\{ \hat{y}_{3i+1}(1), \hat{y}_{3j+1}(1) \} = 0$ follows from 
\eqref{eq:p12}.
Further we have
\begin{align*}
\{ \hat{y}_{3i}(0), \hat{y}_{3j+1}(1) \} 
&= \{y_{3i} f_{3i}, y_{3j+1}' f_{3j+1}(1) \} 
\\
&= \{y_{3i} , y_{3j+1}'\} f_{3i} f_{3j+1}(1)
   + \{y_{3i}, \frac{x'_{3j+3}}{x'_{3j}}\} 
     f_{3i} y_{3j+1}' \frac{x_{3j-1}}{x_{3j+2}}
\\ & \qquad 
   + \{\frac{x_{3i-2}}{x_{3i+1}}, y_{3j+1}' \} 
     y_{3i} \frac{x_{3i+2}}{x_{3i-1}} f_{3j+1}(1)
\\
&= c_y(-\delta_{j,i} + \delta_{j,i-1}) \hat{y}_{3i}(0) \hat{y}_{3j+1}(1),
\end{align*}
where we write $x = \mu_{\overline{0}}(x)$, 
$y' = \mu_{\overline{0}}(y)$ and use  
\begin{align*}
&\{ y_{3i}, x'_{3j}\} 
= 
\{ y_{3i}, \frac{y_{3j} x_{3j-2} x_{3j+2} + x_{3j-1} x_{3j+1}}
                    {(1+y_{3j}) x_{3j}} \}
= c_y \delta_{i,j}  y_{3i} x'_{3j},
\\
&\{ y_{3i}, y'_{3j+1}\} 
= 
\{ y_{3i}, y_{3j+1} \frac{1+y_{3j+3}}{1+y^{-1}_{3j}}\}
= c_y (\delta_{j,i} - \delta_{j,i-1}) y_{3i} y'_{3j+1},
\\
&\{ x_{3i+1}, y'_{3j+1}\}
= \{ x_{3i+1}, y_{3j+1} \frac{1+y_{3j+3}}{1+y^{-1}_{3j}}\}
= c_y \delta_{i,j} x_{3i+1} y'_{3j+1}.    
\end{align*}
\end{proof}

%%%%%%%%%%%%%%%%%%%%%%%%%%%%%%%%%%%%%%%%
\subsection{The discrete Liouville equation}
%%%%%%%%%%%%%%%%%%%%%%%%%%%%%%%%%%%%%%%%

When $N=2m$,
we give the Poisson brackets for a set of initial variables 
$\{y_{2k}(0), y_{2k+1}(1) ~|~ k \in \Z/m \Z\}$
for \eqref{Liu-y-mutation} in $P_+$.
From \eqref{yy-poisson}, the Poisson bracket for $y$ is given by
$$
  \{y_{2k}, y_{i}\} = - c_y(\delta_{2k+1,i} + \delta_{2k-1,i}) y_{2k} y_i,
$$
which induces the Poisson bracket:
\begin{align}\label{Liu-y-poisson}
  &\{y_{2k}(0), y_{2j}(0)\} = \{y_{2k+1}(1), y_{2j+1}(1)\} = 0,
  \\
  &\{y_{2k}(0), y_{2j+1}(1)\} = - c_y(\delta_{j,k} + \delta_{j,k-1}) 
  y_{2k}(0) y_{2j+1}(1). 
\end{align}
These are identified with the Poisson bracket for
$\{\chi_{2k,0}, \chi_{2k+1,1} ~|~ k \in \Z/m \Z\}$.

Since the matrix $B$ has the inverse $B^{-1}$,
the Poisson bracket for $x$ is uniquely determined by
the Poisson matrix $P=c_x B^{-1}$ up to some constant number $c_x$
(Theorem \ref{th:x-Poisson} (i)).

\begin{Remark}
We would remark that the Poisson bracket \eqref{Liu-y-poisson}
appeared in \cite[\S 3]{FV99}.
Its quantization was also introduced
in studying quantum integrable models in discrete space-time.
See \cite{FV99,FKV01,Kash08} and the references therein.
\end{Remark}

When $N=2m+1$,
the Poisson bracket for $y$ is given by
\begin{align*}
\{ y_{i_+}, y_{j_-} \} = 
-c_y(\delta_{j,i+1} + \delta_{j,i-1}) y_{i_+} y_{j_-}, 
\quad
\{ y_{i_+}, y_{j_+} \} = \{ y_{i_-}, y_{j_-} \} = 0.
\end{align*}
These induce the Poisson bracket for 
the set of initial variables  
$\{y_{i_+}(0), y_{i_-}(1) ~|~ i \in \Z / (2m+1) \Z \}$ 
for \eqref{Liu-y-mutationb} in $P_+$, as
\begin{align*}
&\{y_{i_+}(0), y_{j_-}(1)\} 
  = -c_y(\delta_{j,i+1} + \delta_{j,i-1}) y_{i_+}(0) y_{i_-}(1),
\\
&\{ y_{i_+}(0), y_{j_+}(0) \} = \{ y_{i_-}(1), y_{j_-}(1) \} = 0.
\end{align*}
These are identified with the Poisson bracket for 
$\{\chi_{i,0}, \chi_{i,1} ~|~ i \in \Z / (2m+1) \Z \}$:
\begin{align*}
\{\chi_{i,0}, \chi_{j,1}\} 
  = -c_y(\delta_{j,i+1} + \delta_{j,i-1}) \chi_{i,0} \chi_{j,1},
\quad
\{\chi_{i,0}, \chi_{j,0}\} = \{\chi_{i,1}, \chi_{j,1}\} = 0.
\end{align*}

%%%%%%%%%%%%%%%%%%%%%%%%%%%%%%%%%%%%%

\end{document}